\definecolor{halfgray}{gray}{0.55} 
\definecolor{webgreen}{rgb}{0,0.5,0}
\definecolor{webbrown}{rgb}{.6,0,0} \hypersetup{%
\definecolor{halfgray}{gray}{0.55} 
\definecolor{webgreen}{rgb}{0,0.5,0}
\definecolor{webbrown}{rgb}{.6,0,0} \hypersetup{%
\def\ds@whichfont{dsrom}
\DeclareMathAlphabet{\mathds}{U}{\ds@whichfont}{m}{n}
\newtheorem{theorem}{Theorem}[section]
\newtheorem{lemma}[theorem]{Lemma}
\newtheorem{corollary}[theorem]{Corollary}
\newtheorem{proposition}[theorem]{Proposition}
\theoremstyle{definition}
\newtheorem{definition}[theorem]{Definition}
\newtheorem{remark}[theorem]{Remark}
\theoremstyle{plain}
\theoremstyle{plain}
\theoremstyle{plain}
\theoremstyle{remark}
\newtheorem*{acknowledgement*}{Acknowledgement}
\DeclareMathOperator{\Fix}{Fix}
\DeclareMathOperator{\Per}{Per}
\DeclareMathOperator{\ra}{r}
\DeclareMathOperator{\res}{r_{es}}
\newcommand{\cA}{{\mathcal A}}
\newcommand{\cB}{{\mathcal B}}
\newcommand{\cG}{{\mathcal G}}
\newcommand{\cM}{{\mathcal M}}
\newcommand{\cN}{{\mathcal N}}
\newcommand{\cR}{{\mathcal R}}
\newcommand{\cE}{{\mathcal E}}
\newcommand{\cF}{{\mathcal F}}
\newcommand{\cH}{{\mathcal H}}
\newcommand{\Id}{\text{Id}}
\newcommand{\N}{\mathbb{N}}
\newcommand{\R}{\mathbb{R}}
\def\Reg{\mathcal{R}^{\mu}}
\date{\today}
\begin{document}
\title[Dominated splittings and periodic data]{Dominated splittings and periodic data for quasi-compact operator cocycles}

\author[Lucas Backes]{Lucas Backes \orcidlink{0000-0003-3275-1311}}
\address{\noindent Departamento de Matem\'atica, Universidade Federal do Rio Grande do Sul, Av. Bento Gon\c{c}alves 9500, CEP 91509-900, Porto Alegre, RS, Brazil.}
\email[]{lucas.backes@ufrgs.br}

\keywords{Constant periodic data, narrow periodic data, dominated splitting, quasi-compact cocycles}
\subjclass[2020]{Primary: 37D30, 37H15; Secondary: 37L55, 46B20}

\maketitle

\begin{abstract}
For infinite-dimensional quasi-compact cocycles over a map satisfying a certain closing condition, we show that periodic orbits carry enough information to guarantee the existence of a dominated splitting. More precisely, we establish that if the moduli of the $(k+1)$-largest eigenvalues of the cocycle are $e^{\lambda_1n}\geq e^{\lambda_2n}\geq \ldots\geq e^{\lambda_kn}\geq e^{\lambda_{k+1}n}$ at every periodic point of period $n$, and $\lambda_k>\lambda_{k+1}$, then the cocycle admits a dominated splitting of index $k$. As a consequence, if $\lambda_k>0>\lambda_{k+1}$ then the cocycle is uniformly hyperbolic. Furthermore, we are able to obtain these same conclusions even when the eigenvalues are only close to constant, not strictly constant.
\end{abstract}

\section{Introduction}

\emph{(Uniform) Hyperbolicity} is a fundamental concept in the modern theory of dynamical systems. For instance, it serves as a model for robust chaotic dynamics and characterizes structurally stable systems.
Simply put, a system is said to be hyperbolic if the space can be split into two complementary directions such that along one direction, the system exhibits exponential expansion over time, while along the other, it shows exponential contraction. 
Despite its importance, (uniform) hyperbolicity was soon understood to be a less universal property than initially believed. It was shown that open sets of nonhyperbolic systems exist, and many real-world phenomena cannot be described within a hyperbolic framework. As a result, weaker forms of hyperbolicity were introduced to encompass a wider variety of systems. These include \emph{nonuniform hyperbolicity}, \emph{partial hyperbolicity}, and the more general concept of a \emph{dominated splitting}.
On the other hand, verifying whether a system exhibits any of these forms of hyperbolicity can be very complicated. Thus, a key problem in the field is finding different ways of characterizing these properties. 
In the present work, we are interested in presenting sufficient conditions under which a system exhibits a dominated splitting in the context of infinite-dimensional semi-invertible cocycles.

Given an invertible map $f\colon M\to M$ acting on compact metric space $(M,d)$ and a map $A\colon M\to \cB(X)$, where $\cB(X)$ denotes the space of all bounded linear operators of a Banach space $X$, we say that the \emph{cocycle} $A$ over $f$ admits a dominated splitting if, for each $x\in M$, there exists a continuous splitting $X=\cE(x)\oplus \cF(x)$ satisfying $A(x)\cE(x)=\cE(f(x))$ and $A(x)\cF(x)\subseteq \cF(f(x))$ and such that the
``greatest expansion'' of $A(x)$ along $\cF(x)$ is smaller than the ``greatest contraction'' of $A(x)$ along $\cE(x)$ by a factor that becomes exponentially small as time evolves (see Section \ref{sec: dominated splitting} for precise definition). The dominated splitting is said to have index $k$ if $\dim \cE(x) = k$ for some $x\in M$.
In Theorem \ref{thm: main quasi-compact}, we describe sufficient conditions under which $(A,f)$ admits a dominated splitting in the case when $f$ satisfies a certain ``closing property'' and the cocycle is quasi-compact.
Our conditions are expressed in terms of properties of the cocycle on periodic orbits. More precisely, we establish that if the moduli of the $(k+1)$-largest eigenvalues of the cocycle are $e^{\lambda_1n}\geq e^{\lambda_2n}\geq \ldots\geq e^{\lambda_kn}\geq e^{\lambda_{k+1}n}$ at \emph{every} periodic point of period $n$, and $\lambda_k>\lambda_{k+1}$, then the cocycle admits a dominated splitting of index $k$. As a consequence, if $\lambda_k>0>\lambda_{k+1}$ then the cocycle is uniformly hyperbolic. Furthermore, in Theorem \ref{thm: main narrow}, we observe that the same conclusions of Theorem \ref{thm: main quasi-compact} remain valid even when the eigenvalues are only close to constant, not strictly constant.

Our results were inspired by the work of DeWitt and Gogolev \cite{DG}, who obtained similar results in the context of \emph{invertible} and \emph{finite-dimensional} linear cocycles. Our results generalize those in \cite{DG} in two main directions: we consider the case of possibly \emph{non-invertible} cocycles acting on a Banach space of possible \emph{infinite dimension}. The proof consists in showing that our hypothesis imply that a certain condition,  which is equivalent to the existence of a dominated splitting, is satisfied. This condition is given in terms of the behavior of the Gelfand numbers of the cocycle along orbits and was obtained by Blumenthal and Morris \cite{BM} generalizing previous work of Bochi and Gourmelon \cite{BG} in the finite-dimensional context. Additionally, we rely heavily on the machinery developed by Blumenthal \cite{B16} and González-Tokman and Quas \cite{GTQ} to prove an infinite-dimensional version of Oseledets' Mutliplicative Ergodic Theorem.

Dominated splittings were introduced in the 1980s and played a crucial role in the study of the Stability Conjecture \cite{Liao, Mane84, Pliss}. Since then, the theory has been significantly developed, and in some contexts, its dynamics are quite well-understood. For example, in their important work, Pujals and Sambarino \cite{PS} provide a clear description of two-dimensional dynamics that admit a dominated splitting. For a more comprehensive overview of dominated splittings in the context of smooth dynamics, we recommend the excellent survey by Pujals \cite{Pujals}. Returning to the setting of linear cocycles, we note that the problem of finding sufficient conditions for the existence of a dominated splitting - the main aim of this paper - has already been addressed by other authors.
For instance, in addition to the work of DeWitt and Gogolev \cite{DG} mentioned earlier, significant contributions were made by Kassel and Potrie \cite{KP}. They have shown that, under suitable conditions on the base dynamics, a uniform gap between the $k$-th and $(k+1)$-th Lyapunov exponents for \emph{all invariant measures} implies the existence of a dominated splitting of index $k$.
A similar result was obtained by Yemini in \cite{Yemini}. He showed that if a uniform and $C^0$-persistent gap exists between the $k$-th and $(k+1)$-th Lyapunov exponents (associated with a fixed invariant measure), a dominated splitting of index $k$ exists on the support of that invariant measure. We emphasize that all of these works are set in a finite-dimensional and invertible context, whereas ours is placed in a potentially infinite-dimensional and non-invertible setting.

The paper is organized as follows: Section \ref{sec: statements} presents the setting and our main results. Section \ref{sec: prelim} provides preliminary results useful in Section \ref{sec: proof theo compact}, where the main theorems are proved. Finally, Section \ref{sec: applications} discusses applications and consequences of our results.

\section{Statements}\label{sec: statements}

Let $(M,d)$ be a compact metric space and $f\colon M \to M$ a homeomorphism. Denote by $\cM(f)$ the set of all $f$-invariant probability measures defined on the Borel subsets of $M$ and by $\cM_{erg}(f)$ the ergodic elements of $\cM(f)$. Moreover, let $(X,\|\cdot\|)$ be an arbitrary Banach space and denote by $\cB (X)$ the space of all bounded linear maps from $X$ to itself, by $\cB_0(X)$ the subset of $\cB(X)$ formed by the compact operators of $X$, by $\cB^i(X)$ the elements of $\cB(X)$ that are injective and $\cB_0^i(X)=\cB_0(X)\cap \cB^i(X)$. We recall that $\cB(X)$  is  a Banach space with respect to the operator norm
\[
\|T\| =\sup \lbrace \|Tv\|/\|v\|;\; \|v\|\neq 0 \rbrace, \quad T\in \cB (X)
\]
and $\cB_0(X)$ is a closed subspace of $(\cB(X),\|\cdot\|)$. Observe that although we use the same notation for the norms on $X$ and $\cB(X)$, this will not cause any confusion. 

We denote by $\cG(X)$ the Grassmanian of closed complemented subspaces of $X$ and by $\cG_k(X)$ and $\cG^k(X)$ the Grassmanians of $k$-dimensional and closed $k$-codimensional subspaces of $X$, respectively.

\subsection{Closing property}
We say that $f$ satisfies the \textit{closing property} if there exist $c,\theta ,l >0$ such that for any $x\in M$ and $n\in \N$, there exists $0\leq j\leq l$ and a periodic point $p\in M$ such that $f^{n+j}(p)=p$ and
\begin{equation}\label{eq: closing property}
d(f^i(x),f^i(p))\leq c e^{-\theta \min\lbrace i, n-i\rbrace}d(f^n(x),x),
\end{equation}
for every $i=0,1,\ldots ,n$. We note that shifts of finite type and locally maximal hyperbolic sets are particular examples of maps satisfying this property. We refer to~\cite[Ch. 18]{KH95} for details.

\subsection{Semi-invertible operator cocycles}
Given a map $A\colon M\to \cB(X)$, the \emph{semi-invertible operator cocycle} (or just \emph{cocycle} for short) generated by $A$ over $f$ is  defined as the map $\cA\colon \mathbb{N}\times M\to \cB(X)$ given by
\begin{equation*}\label{def:cocycles}
A^n(x):=\cA(n, x)=
\left\{
	\begin{array}{ll}
		A(f^{n-1}(x))\ldots A(f(x))A(x)  & \mbox{if } n>0 \\
		\Id & \mbox{if } n=0,\\
	\end{array}
\right.
\end{equation*}
for all $x\in M$. 
The term `semi-invertible' refers to the fact that the action of the underlying dynamical system $f$ is assumed to be invertible while the action on the fibers given by $A$ may fail to be invertible. 

We say that the cocycle generated by $A$ over $f$ is \emph{compact} if $A$ take values in $\cB_0(X)$, i.e. if $A(x)\in \cB_0(X)$ for each $x\in M$.

\subsection{Dominated splitting}\label{sec: dominated splitting}
 We say that $A$ admits a \emph{dominated splitting} if there exist continuous functions $\cE,\cF \colon X \to \cG(X)$ and constants $C>0$ and $\tau \in (0,1)$ such that $X=\cE(x)\oplus \cF(x)$ for every $x \in M$, $A^n(x)\cE(x)=\cE(f^n(x))$ and $A^n(x)\cF(x)\subseteq \cF(f^n(x))$ for every $x \in M$ and $n \geq 1$, and, moreover,
\[|A^n(x) v|\leq C\tau^n |A^n(x) u| \]
for every $u \in \cE(x)$, $v \in \cF(x)$ with $|u|=|v|=1$ and every $x \in M$ and $n \geq 1$. We say that the dominated splitting has \emph{index $k$} if $\dim \cE(x) = k$ for some (and, consequently, for all) $x\in M$.

\subsection{Quasi-compact operators and their spectrum} \label{sec: quasi-compact} Given $T\in \cB(X)$, let us denote by $\ra(T)$ and $\res(T)$ the \emph{spectral radius} and the \emph{essential spectral radius} of $T$, respectively. Recall that $T\in \cB(X)$ is called \emph{quasi-compact} if $\res(T)<\ra(T)$. Equivalently, $T\in \cB(X)$ is quasi-compact if $X$ can be decomposed as the sum of two $T$-invariant closed subspaces
\[X=Y \oplus Z\]  
such that $\ra(T_{|Z})<\ra(T)$, $\dim Y <+\infty$ and all the eigenvalues of $T_{|Y}$ have modulus $\ra(T)$ (see \cite{MMB}). In particular, given $\res(T)<\rho\leq \ra(T)$, the spectrum $\sigma(T)$ of $T$ contains only finitely many elements of modulus bigger than or equal to $\rho$ and, moreover, these elements are all eigenvalues of $T$. Let us denote them by $(\gamma_i)_{i=1}^{t_\rho}$. Then there exist $T$-invariant subspaces $Y_\rho$ and $Z_\rho$ such that
\[X=Y_\rho \oplus Z_\rho\]  
with $\dim Y_\rho<+\infty$, $Y_\rho=\bigoplus_{i=1}^{t_\rho}\left(\cup_{j\geq 1}\ker(T-\gamma_i)^j\right)$ and $Z_\rho=\{x\in X: \; \limsup_n \|T^nx\|^{1/n}<\rho\}$ (see \cite[Chapter XIV]{HH}). As a consequence, there are at most countably many elements in $\sigma(T)$ with modulus bigger than $\res(T)$ and these elements are all eigenvalues of $T$. Let us call them the \emph{exceptional eigenvalues of $T$}.

\subsection{Constant periodic data} \label{sec: constant pd qc} Given $A\colon M\to \cB(X)$, $n\in \N$ and $p\in \Fix(f^n)$, let us assume that $A^n(p)$ is a quasi-compact operator. 
Then, by the observations in Section \ref{sec: quasi-compact}, the exceptional eigenvalues of $A^n(p)$ repeated according to multiplicity can be written as $(\gamma_i(p))_{i=1}^{b_p}$ for some $b_p\in \N\cup \{+\infty\}$ with 
\[
|\gamma_1(p)|\geq|\gamma_2(p)|\geq |\gamma_3(p)|\geq \ldots . \]

\begin{definition}
Let $A\colon M\to \cB(X)$ be such that $A^n(p)$ is a quasi-compact operator for every $p\in \Fix(f^n)$ and $n\in \N$.
Given $k\leq \dim X $ and a sequence of real numbers $\lambda_1\geq \lambda_2\geq \cdots \geq \lambda_k$ (observe that $k$ may be infinite), we say that the cocycle $A$ has \emph{$k$-constant periodic data of type $(\lambda_1,\ldots,\lambda_k)$} if $b_p\geq k$ for every $p\in \Per(f)$ and
\[ |\gamma_i(p)|=e^{n\lambda_i} \text{ for all } 1\leq i\leq k, n\in \N \text{ and } p\in \Fix(f^n). \]
\end{definition}

\subsection{Quasi-compact cocycles}\label{sec: quasicompact cocycles}
 Let $B_X(0,1)$ denote the unit ball in $X$ centered at $0$. Given $T\in \cB(X)$, let $\|T\|_\text{ic}$ be the infimum over all $r>0$ with the property that $T(B_X(0,1))$ can be covered by finitely many open balls of radius $r$. It is easy to show that
\begin{equation}\label{ic1}
 \|T\|_\text{ic} \le \lVert T\rVert, \quad \text{for every $T\in \cB(X)$}
\end{equation}
and 
\begin{equation}\label{ic2}
 \|T_1 T_2\|_\text{ic} \le \|T_1\|_\text{ic} \cdot \|T_2\|_\text{ic}, \quad \text{for every $T_1, T_2\in \cB(X)$.}
\end{equation}
Hence, given $\mu\in \cM_{erg}(f)$, \eqref{ic2} together with Kingman's Subadditive Theorem (see, for instance, \cite{LLE}) implies that there exists $\kappa(\mu)\in [-\infty, \infty)$ such that
\[
 \kappa(\mu)=\lim_{n\to \infty}\frac 1n \log \| A^n(x)\|_{ic} \quad \text{for $\mu$-a.e. $x\in M$.}
\]
Observe that whenever $\dim X<\infty$ or, more generally, if $A$ takes values in the set of compact operators $\cB_0(X)$ of $X$, we have that $\kappa(\mu)=-\infty$. Indeed, in this case we have that $\| A^n(x)\|_{ic}=0$ for each $n$ which readily implies that
$\kappa(\mu)=-\infty$ (taking $\log 0=-\infty$).

In addition, by using again Kingman's Subadditive Theorem together with the subadditivity of the operator norm, we have that there exists $\lambda(\mu)\in [-\infty, \infty)$ such that
\[
 \lambda(\mu)=\lim_{n\to \infty}\frac 1n \log \lVert A^n(x)\rVert \quad \text{for $\mu$-a.e. $x\in M$.}
\]
Note  that~\eqref{ic1} implies that $\kappa(\mu) \le \lambda(\mu)$. We say that the cocycle $A$ is \emph{quasi-compact} with respect to $\mu$ if $\kappa(\mu)<\lambda(\mu)$. Quasi-compactness plays an important role in the study of ergodic theory associated to cocycles acting in infinite-dimensional Banach spaces. One of the main reasons for this is that, under a quasi-compactness assumption, we have the availability of the Multiplicative Ergodic Theorem (see, for instance, \cite{B16,FLQ13,GTQ0,GTQ}).
 Sufficient conditions under which the cocycle is quasi-compact are given, for instance, in~\cite[Lemma 2.1]{DFGTV}.

Observe that, given $p\in \Fix(f^n)$ and considering $\mu_p\in \cM_{erg}(f)$ given by
\[\mu_p=\frac{1}{n}\sum_{j=0}^{n-1}\delta_{f^j(p)},\]
we have that $\kappa(\mu_p)<\lambda(\mu_p)$ if and only if $A^n(p)$ is a quasi-compact operator in the sense of Section \ref{sec: quasi-compact}. Indeed, from the spectral radius formula we have that
\[\ra(A^n(p))=\lim_{k\to \infty} \|A^{kn}(p)\|^{1/k}=e^{n\lambda(\mu_p)}.\]
Similarly, by Nussbaum's formula for the essential spectral radius \cite{N70} we have that
\[\res(A^n(p))=\lim_{k\to \infty} \|A^{kn}(p)\|_{ic}^{1/k}=e^{n\kappa(\mu_p)}.\]
Therefore, $\kappa(\mu_p)<\lambda(\mu_p)$ if and only if $\res(A^n(p))<\ra(A^n(p))$ proving our claim.

\subsection{Main result in the case of constant periodic data} We now present our main result in the case when the cocycle has constant periodic data. Recall that
$A\colon M\to \cB(X)$ is said to be an  \emph{$\alpha$-H\"{o}lder continuous map} if there  exists a constant $C>0$ such that
\begin{displaymath}
\|A(x)-A(y)\| \leq C d(x,y)^{\alpha},
\end{displaymath}
for all $x,y\in M$. 
\begin{theorem}\label{thm: main quasi-compact}  
Let $f\colon M\to M$ be a homeomorphism exhibiting the closing property and $A\colon M\to \cB^i(X)$ be an $\alpha$-H\"older continuous map.
 Given $k<\dim X$ and a sequence of real numbers $\lambda_1\geq \lambda_2\geq \cdots \geq \lambda_k\geq \lambda_{k+1}$, suppose that
 \begin{equation}\label{eq: lambda k+1 bigger kappa mu}
 \lambda_{k+1}>\sup_{\mu\in \cM_{erg}(f)} \kappa(\mu).
 \end{equation}
If $A$ has $(k+1)$-constant periodic data of type $(\lambda_1,\ldots,\lambda_k,\lambda_{k+1})$ and $\lambda_k>\lambda_{k+1}$, then $A$ admits a dominated splitting of index $k$. 
\end{theorem}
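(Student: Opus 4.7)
The plan is to verify the criterion of Blumenthal and Morris~\cite{BM}: $A$ admits a dominated splitting of index $k$ if and only if there exist $C>0$ and $\tau\in(0,1)$ such that
\[
s_{k+1}(A^n(x)) \leq C\tau^n\, s_k(A^n(x)), \qquad \forall\, x\in M,\ n\geq 1,
\]
where $s_i(\cdot)$ denotes the $i$-th singular/Gelfand (approximation) number. I will establish this uniform exponential gap by contradiction, using the closing property to compare every long orbit with a periodic one and the quasi-compactness hypothesis~\eqref{eq: lambda k+1 bigger kappa mu} to access the infinite-dimensional MET of~\cite{B16, GTQ}.

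\textbf{Closing and the periodic Oseledets splitting.}
Suppose the criterion fails. Then there exist $\epsilon\in(0,\lambda_k-\lambda_{k+1})$, points $x_n\in M$, and $m_n\to\infty$ with $s_{k+1}(A^{m_n}(x_n))\geq e^{-\epsilon m_n}\,s_k(A^{m_n}(x_n))$. For each $n$ the closing property produces a periodic point $p_n$ of period $q_n=m_n+j_n$, with $0\le j_n\le l$, whose orbit exponentially shadows that of $x_n$ via~\eqref{eq: closing property}. Combined with the $\alpha$-H\"older regularity of $A$, this yields the summable perturbation estimate
\[
\|A(f^i(x_n))-A(f^i(p_n))\| \leq C_1\, e^{-\theta\alpha\min(i,\,m_n-i)}, \qquad 0\leq i<m_n.
\]
Since~\eqref{eq: lambda k+1 bigger kappa mu} gives $\lambda_{k+1}>\kappa(\mu_{p_n})$ while the constant periodic data force $\lambda(\mu_{p_n})\geq \lambda_1\geq \lambda_{k+1}$, the cocycle is quasi-compact at $\mu_{p_n}$, so the MET applies. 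Together with the $(k+1)$-constant periodic data it identifies the top $k+1$ Lyapunov exponents at $\mu_{p_n}$ as $\lambda_1,\ldots,\lambda_{k+1}$ and furnishes an Oseledets splitting along the orbit of $p_n$ with a spectral gap of size $\lambda_k-\lambda_{k+1}$ between its $k$-th and $(k+1)$-th blocks.

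\textbf{Transfer to the orbit of $x_n$ and the main obstacle.}
Using this Oseledets frame along $p_n$ as a reference, I would carry out a Kalinin-type telescoping argument comparing $A^{m_n}(x_n)$ with $A^{m_n}(p_n)$: the summability of the H\"older shadowing error, combined with the periodic spectral gap, should control the multiplicative distortion on each Oseledets block by a constant independent of $n$. Passing the comparison to Gelfand numbers (absorbing the bounded correction due to $|j_n|\leq l$) would give
\[
\tfrac{1}{m_n}\log s_i(A^{m_n}(x_n)) \longrightarrow \lambda_i, \qquad i=1,\ldots,k+1,
\]
contradicting the standing assumption once $m_n$ is large. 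The main obstacle is precisely this transfer step. In infinite dimensions the comparison must be performed in a basis adapted to the Oseledets filtration on the nearby periodic orbit; one has to verify that the exponentially decaying shadowing tails dominate the norm growth of $A^i$ on the complementary ``slow'' Oseledets subspace, and the argument has to remain meaningful without the invertibility of $A$. The hypothesis $\lambda_{k+1}>\sup_\mu\kappa(\mu)$ is what keeps the $(k+1)$-th direction strictly above the essential spectrum not only at each $\mu_{p_n}$ but also at any ergodic component of a weak-$\ast$ accumulation point of the empirical measures $\tfrac{1}{m_n}\sum_{i=0}^{m_n-1}\delta_{f^i(x_n)}$, ensuring that the requisite Oseledets splittings are available throughout the argument.
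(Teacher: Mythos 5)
There are two genuine problems with your proposal. First, the criterion you set out to verify is the wrong one for this setting: in the infinite-dimensional, non-invertible case the single-point condition $c_{k+1}(A^n(x))\le C\tau^n c_k(A^n(x))$ (your displayed inequality) is \emph{not} equivalent to the existence of a dominated splitting; one must verify the stronger condition \eqref{eq:gap theo charact}, namely $\max\{c_{k+1}(A^n(x)),c_{k+1}(A^n(f(x)))\}<C\tau^n c_k(A^{n+1}(x))$, and \cite[Example 3]{BM} shows that \eqref{eq: gap finite dim} cannot replace it. This is repairable only because the correct route (see below) actually produces two-sided bounds $C^{-1}e^{(\lambda_q-\varepsilon)n}\le c_q(A^n(x))\le Ce^{(\lambda_q+\varepsilon)n}$ for \emph{every} $x$ and $q\le k+1$, from which \eqref{eq:gap theo charact} follows; your one-sided gap at a single base point would not suffice even if established.

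Second, and more seriously, the step you yourself flag as ``the main obstacle'' --- transferring spectral information from the shadowing periodic orbit to the orbit of $x_n$ --- is left unproved, and the mechanism you sketch (a Kalinin-type telescoping in an Oseledets-adapted frame along $p_n$) is exactly the kind of argument that is delicate without invertibility and without uniform control of the angles between Oseledets blocks. The actual resolution is more elementary and avoids Oseledets frames entirely: apply the closing property to the orbit segment from $f^{-n}(x)$ to $f^n(x)$, so that for $0\le i\le \gamma n$ (a small fraction $\gamma$ of the segment) the shadowing error satisfies $\|A(f^i(x))-A(f^i(p))\|\le Ce^{-\theta\alpha(1-\gamma)n}$, \emph{uniformly exponentially small in $n$}; a binomial expansion then gives $\|A^{\lfloor\gamma n\rfloor}(x)-A^{\lfloor\gamma n\rfloor}(p)\|\le Ce^{-\varepsilon_0 n}$ in operator norm, and since Gelfand numbers are $1$-Lipschitz in the operator norm (Lemma \ref{lem: gelfand Lipschitz}) this transfers the periodic estimates directly. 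Note that the comparison is made only over time $\lfloor\gamma n\rfloor$, not over the full length $m_n$ as in your sketch, which is why no block-by-block control is needed. What is then required on the periodic side is a two-sided estimate on $c_q(A^i(p))$ at the intermediate time $i\approx\gamma\cdot(\text{period})$, not at the full period; this is obtained from volume-growth bounds ($V_q$ and Lemma \ref{lemma: relation volume growth}), and the uniform-in-$x$ upper bound on $V_q(A^n(x))$ needed there comes from a subadditive uniformity result (Proposition \ref{prop: BS13}) combined with upper semicontinuity of $l_q$ along weak$^*$ limits of periodic measures and the periodic approximation of exceptional Lyapunov exponents from \cite{BD} --- this last point being where hypothesis \eqref{eq: lambda k+1 bigger kappa mu} is genuinely used. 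None of these ingredients appears in your outline, so as written the proposal does not constitute a proof.
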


\begin{remark}
Observe that $\sup_{\mu\in \cM_{erg}(f)} \kappa(\mu)=-\infty$ whenever $A\colon M\to \cB_0(X)$, as observed in Section \ref{sec: quasicompact cocycles}.
In particular, condition \eqref{eq: lambda k+1 bigger kappa mu} is automatically satisfied whenever dealing with compact cocycles.
\end{remark}

\begin{remark}\label{rem: bochi}
In \cite{Boch}, Bochi presented an example where the previous result fails if we assume that $A$ is only continuous even in the much simpler case when $A$ takes values in $SL(2,\mathbb{R})$, highlighting the importance of the assumption that $A$ is H\"older continuous. 
\end{remark}

 We will now extend the scope of Theorem \ref{thm: main quasi-compact} to include cocycles whose periodic data is not necessarily constant, but only ``close'' to constant. For this, we introduce the following notion.

\subsection{$\delta$-narrow periodic data}
As in Section \ref{sec: constant pd qc}, given $A\colon M\to \cB(X)$, $n\in \N$ and $p\in \Fix(f^n)$, let us assume that $A^n(p)$ is a quasi-compact operator and  
denote by $(\gamma_i(p))_{i=1}^{b_p}$ with 
\[
|\gamma_1(p)|\geq|\gamma_2(p)|\geq |\gamma_3(p)|\geq \ldots  \]
the exceptional eigenvalues of $A^n(p)$ repeated according to multiplicity where $b_p\in \N\cup \{+\infty\}$.

\begin{definition}
Let $A\colon M\to \cB(X)$ be such that $A^n(p)$ is a quasi-compact operator for every $p\in \Fix(f^n)$ and $n\in \N$.
Given $\delta\geq 0$, $k\leq \dim X $ and a sequence of real numbers $\lambda_1\geq \lambda_2\geq \cdots \geq \lambda_k$ (observe that $k$ may be infinite), we say that the cocycle $A$ has \emph{$\delta$-narrow periodic data around $(\lambda_1,\ldots,\lambda_k)$} if $b_p\geq k$ for every $p\in \Per(f)$ and
\[e^{n(\lambda_i-\delta)}\leq  |\gamma_i(p)|\leq e^{n(\lambda_i+\delta)} \]
for all $1\leq i\leq k, n\in \N$ and $p\in \Fix(f^n)$.
\end{definition}

 Note that $A$ having $k$-constant periodic data of type $(\lambda_1,\ldots,\lambda_k)$ is equivalent to $A$ having $0$-narrow periodic data around $(\lambda_1,\ldots,\lambda_k)$.

\begin{theorem}\label{thm: main narrow}
Let $f\colon M\to M$ be a homeomorphism exhibiting the closing property and $A\colon M\to \cB^i(X)$ be an $\alpha$-H\"older continuous map. 
Given $k<\dim X$ and a sequence of real numbers $\lambda_1\geq \lambda_2\geq \cdots \geq \lambda_k\geq \lambda_{k+1}$, suppose that condition \eqref{eq: lambda k+1 bigger kappa mu} holds and that $A$ has $\delta$-narrow periodic data around $(\lambda_1,\ldots,\lambda_k,\lambda_{k+1})$ for some $\delta>0$. Then, if $\lambda_k>\lambda_{k+1}$ and $\delta>0$ is small enough, $A$ admits a dominated splitting of index $k$. 
\end{theorem}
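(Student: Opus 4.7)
The plan is to follow the same overall strategy used to prove Theorem \ref{thm: main quasi-compact}, and to verify that the argument is robust under a small $\delta$-perturbation of the periodic spectrum. Concretely, the existence of a dominated splitting of index $k$ is equivalent, by the Blumenthal--Morris criterion, to the existence of constants $C>0$ and $\tau\in(0,1)$ such that the Gelfand numbers of the cocycle satisfy
\[
c_{k+1}(A^n(x))\leq C\,\tau^n\,c_k(A^n(x))\qquad \text{for every }x\in M,\ n\in\N.
\]
My goal is to show this under the narrow hypothesis, producing a contradiction if the gap fails. Let $\gamma:=\lambda_k-\lambda_{k+1}>0$ and fix $\delta<\gamma/4$; the other thresholds will be determined along the way.

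First I would suppose the Blumenthal--Morris gap fails. Then for every $\epsilon>0$ one obtains a sequence of orbit segments $(x_m,f(x_m),\ldots,f^{n_m}(x_m))$ with $n_m\to\infty$ and
\[
c_{k+1}(A^{n_m}(x_m))> e^{-\epsilon n_m}\,c_k(A^{n_m}(x_m)).
\]
Next, the closing property furnishes a periodic point $p_m$ of period $n_m+j_m$ (with $0\le j_m\le l$) whose orbit shadows that of $x_m$ with the exponential rate \eqref{eq: closing property}. Combining this with the $\alpha$-Hölder continuity of $A$, a standard telescoping estimate shows that $A^{n_m}(p_m)$ and $A^{n_m}(x_m)$ differ by an operator whose norm is exponentially small, with constants independent of $m$; a similar bound holds for $A^{n_m+j_m}(p_m)$ after closing the loop by a bounded number of steps. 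Then I would iterate: for the quasi-compact operator $T_m:=A^{n_m+j_m}(p_m)$, Gelfand numbers obey $\lim_{N\to\infty}\tfrac{1}{N}\log c_i(T_m^N)=\log|\gamma_i(p_m)|$ for every $i$ with $\log|\gamma_i(p_m)|>\kappa(\mu_{p_m})(n_m+j_m)$, which is guaranteed for $i\leq k+1$ by hypothesis \eqref{eq: lambda k+1 bigger kappa mu}. Transferring the near-equality $c_{k+1}/c_k\gtrsim e^{-\epsilon n_m}$ from $A^{n_m}(x_m)$ to $T_m^N$ for suitable $N$, and then to the ratio of exceptional eigenvalues, yields
\[
\frac{|\gamma_{k+1}(p_m)|}{|\gamma_k(p_m)|}\geq e^{-\epsilon'(n_m+j_m)}.
\]
The $\delta$-narrow hypothesis gives the upper bound $e^{-(\gamma-2\delta)(n_m+j_m)}$ for the same ratio. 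Choosing $\epsilon'<\gamma-2\delta$ (which forces the initial $\epsilon$ to be small and $\delta<\gamma/2$) produces a contradiction once $n_m$ is large enough.

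The main obstacle will be Step~4, namely the passage from closeness of $A^{n_m}(x_m)$ and $A^{n_m}(p_m)$ in operator norm to closeness of their Gelfand numbers in a \emph{multiplicative} sense. Operator-norm perturbations are not, in general, enough to control Gelfand numbers multiplicatively on a Banach space; to do this I would rely on the machinery of \cite{B16,GTQ}, using the quasi-compactness gap $\lambda_{k+1}>\sup_\mu \kappa(\mu)$ to split the action into a dominant finite-dimensional part and an exponentially small essential part, and then applying the perturbation estimates for equivariant subspaces developed there. This is precisely the infinite-dimensional substitute for the finite-dimensional singular-value perturbation lemmas used in \cite{DG}. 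Once this step is in hand, the $\delta$-narrow data enters only at the very end, contributing the harmless additive error $2\delta$ to the exponent, which can be absorbed by the initial choice of $\delta$ in terms of $\gamma$.
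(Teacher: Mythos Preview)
Your overall route differs from the paper's. The paper does not argue by contradiction: it establishes, for every $x\in M$ and $1\leq q\leq k+1$, the direct two-sided bounds $C^{-1}e^{(\lambda_q-\varepsilon)n}\leq c_q(A^n(x))\leq Ce^{(\lambda_q+\varepsilon)n}$ (the $\delta$-narrow version of Lemma~\ref{lem: upper lower bound ck general}), from which the Blumenthal--Morris condition \eqref{eq:gap theo charact} is read off immediately. The upper bound comes from identifying the Lyapunov spectrum of every ergodic measure via \cite[Theorem~2.5]{BD} and then invoking the subadditive Proposition~\ref{prop: BS13}; the lower bound is transferred from periodic points. Your contradiction scheme, by contrast, has two genuine gaps.

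First, you use the wrong Blumenthal--Morris criterion: in infinite dimensions the ratio condition $c_{k+1}(A^n(x))\leq C\tau^n c_k(A^n(x))$ is \emph{not} known to imply a dominated splitting (see the discussion after Theorem~\ref{thm:BM charact} and \cite[Example~3]{BM}); you must verify the asymmetric form \eqref{eq:gap theo charact}. Second, and more seriously, the claim that a ``standard telescoping estimate'' makes $\|A^{n_m}(x_m)-A^{n_m}(p_m)\|$ exponentially small is false. The closing bound \eqref{eq: closing property} gives no smallness at the endpoints $i=0,n_m$ (the factor $d(f^{n_m}(x_m),x_m)$ is merely bounded), while the telescoped product accumulates a factor $e^{\eta n_m}$ from the operator norms; the naive estimate is therefore of order $e^{\eta n_m}$, not $e^{-\varepsilon_0 n_m}$. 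This is precisely why the paper, following \cite[Proposition~3.4]{DG}, applies closing to a segment of length $2n$ centred at $x$ and then compares only over a short initial fraction $\lfloor\gamma n\rfloor$, with $\gamma$ chosen small via \eqref{eq: rel gamma epsilon0}. Without this device your transfer step never begins; and even with it, converting additive operator-norm closeness into multiplicative control of $c_k$ still needs an a~priori lower bound on $c_k$ along partial iterates of periodic orbits (Corollary~\ref{cor: upper and lower bound ck per gamma}), which in turn rests on the uniform upper bound of Lemma~\ref{lem: upper bound Vk}. Your appeal to the Oseledets machinery of \cite{B16,GTQ} does not supply this, since those decompositions exist only $\mu$-a.e.\ and not along the specific orbit segments your contradiction produces.
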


Obviously, Theorem \ref{thm: main quasi-compact} is a particular case of Theorem \ref{thm: main narrow}. Nevertheless, we have chosen to present both versions because the proof for Theorem \ref{thm: main quasi-compact} is much cleaner and already explores the main ideas needed for the general case. In particular, after proving Theorem \ref{thm: main quasi-compact}, we will only explain how to proceed in the general case without giving full details.

\section{Preliminaries}\label{sec: prelim} 
In this section, we gather some useful preliminary results that are going to be used in the proofs of our main results.

\subsection{Gelfand numbers and volume growth} Let $T\in \cB(X)$. For each $k\in \N$ such that $k\le d:=\dim X$, we define the \emph{$k$-Gelfand number} as
\[
c_k(T):=\inf \left\lbrace \|T_{\mid V}\|; \; V\in \cG^{k-1}(X) \right\rbrace,
\]
with $c_1(T)=\|T\|$ and the \emph{maximal $k$-volume growth} by
\[
V_k(T)=\sup \left\lbrace \text{det}(T\mid_V); V\in \cG_k(X) \right\rbrace,
\]
where 
\begin{equation*}
\text{det}(T\mid_V)=
\left\{
	\begin{array}{ll}
		\frac{m_{TV}(T(B_V))}{m_V(B_V)}  & \mbox{if } T\mid_V \text{ is injective, } \\
		0 & \mbox{otherwise},\\
	\end{array}
\right.
\end{equation*}
and $m_V$ denotes the \emph{Haar measure} on the subspace $V$ normalized so that the unit ball $B_V$ in $V$ has measure given by the volume of the Euclidean unit ball in $\R^k$.  
We recall that in the particular case when $(X,\|\cdot\|)$ is a Hilbert space, $c_k(T)$ coincide with the $k$-th singular value of $T$ while $V_k(T)$ coincide with the product of the top $k$ singular values of $T$.

We note that $V_k(T)$ and $\prod_{j=1}^k c_j(T)$ may be interpreted as the growth rates of $k$-dimensional volumes spanned by $\{Tv_i\}_{i=1}^k$, where $v_i\in X$ are unit vectors and, up to a  multiplicative constant, they coincide as shown in the next result.

\begin{lemma}[{Lemma 2.22 in \cite{B16}}]\label{lemma: relation volume growth}
Given $k\in \N$ such that $k\le \dim X$, there exists $C>0$ (depending only on $k$) such that for every $T\in \cB(X)$,
\[\frac{1}{C}V_k(T)\leq \prod_{j=1}^k c_j(T)\leq C V_k(T).\]
\end{lemma}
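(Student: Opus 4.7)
The plan is to reduce both quantities to a finite-dimensional linear-algebra comparison in which Auerbach's lemma supplies all constants in terms of $k$ alone. Recall Auerbach: every $k$-dimensional subspace $V \subseteq X$ admits a basis $\{v_1,\ldots,v_k\}$ with $\|v_i\|=1$ and biorthogonal functionals $v_i^* \in V^*$ of unit norm. The sandwich $\operatorname{conv}(\pm v_i) \subseteq B_V \subseteq \{\sum t_i v_i : \max_i|t_i|\leq 1\}$ shows that in the coordinates given by $\{v_i\}$ the Haar measure $m_V$ and Lebesgue measure on $\mathbb{R}^k$ differ only by a factor lying in some interval $[c_k,C_k]$ depending only on $k$. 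Applied both to any competing $V$ and to its image $T(V)$, this reduces $\det(T|_V)$ to an honest $k\times k$ determinant of a matrix representing $T|_V$ in an Auerbach basis of $T(V)$, up to a $k$-dependent factor.

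For the upper bound $\prod_{j=1}^k c_j(T) \leq C\,V_k(T)$, I would construct an explicit subspace almost attaining the product of Gelfand numbers. Fix $\varepsilon>0$, and choose unit vectors $u_k,u_{k-1},\ldots,u_1$ successively: having chosen $u_{j+1},\ldots,u_k$, use $c_j(T) = \inf\{\|T|_W\|:\operatorname{codim}W\leq j-1\}$ together with a dimension count to pick a unit $u_j$ that (i) is Auerbach-biorthogonal to the previously chosen vectors (i.e.\ annihilated by the biorthogonal functionals of the $u_i$ with $i>j$), and (ii) satisfies $\|Tu_j\| \geq c_j(T)-\varepsilon$. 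Setting $V = \operatorname{span}(u_1,\ldots,u_k)$, the triangular structure of the matrix of $T|_V$ (in an Auerbach basis of $T(V)$) gives $\det(T|_V) \geq C_k^{-1}\prod_j(c_j(T)-\varepsilon)$, and letting $\varepsilon\to 0$ yields $V_k(T) \geq C_k^{-1}\prod_j c_j(T)$.

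For the lower bound $V_k(T) \leq C\prod_{j=1}^k c_j(T)$, given any $k$-dimensional $V$ with $T|_V$ injective, pick a decreasing chain $W_1 \supseteq W_2 \supseteq \cdots \supseteq W_k$ of closed subspaces with $\operatorname{codim}(W_j)=j-1$ and $\|T|_{W_j}\| \leq c_j(T)+\varepsilon$. A dimension count gives $\dim(V\cap W_j) \geq k-j+1$, producing a flag $V = V_1 \supseteq V_2 \supseteq \cdots \supseteq V_k$ with $\dim V_j = k-j+1$. A triangular change of basis of $V$ adapted to this flag produces vectors whose $T$-images have norms at most $c_j(T)+\varepsilon$; computing $\det(T|_V)$ in this basis through Auerbach coordinates on $T(V)$ bounds it by $C_k\prod_j(c_j(T)+\varepsilon)$, which yields the claim upon taking suprema and sending $\varepsilon\to 0$.

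The main obstacle is the absence of an inner product: in Hilbert space, the singular value decomposition makes both inequalities immediate with $C=1$, but in a general Banach space one must replace orthogonal projections by Auerbach biorthogonal systems, and carefully track multiplicative constants through the inductive selection of the $u_j$ (or of the flag) and through the Haar-to-Lebesgue conversion. The saving grace is that Auerbach's lemma controls all such distortions purely in terms of $k$ (more sharply via John's theorem), which is exactly the dependence the statement requires.
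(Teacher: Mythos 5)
The paper itself does not prove this lemma; it quotes it verbatim from Blumenthal \cite{B16}, so your proposal has to stand on its own. Your overall framework (Auerbach bases to pass between Haar measure and coordinates, plus base-times-height volume estimates) is the right one, and your argument for $V_k(T)\le C\prod_j c_j(T)$ is essentially sound after two repairs: the near-optimal subspaces $W_j$ need not form a nested chain, so you should work with $V\cap W_j$ directly and choose $w_j\in V\cap W_j$ inductively so that it stays at distance bounded below (say $1/2$, by a Riesz-lemma argument inside the finite-dimensional $V$) from $\operatorname{span}(w_{j+1},\dots,w_k)$; this is also what keeps the denominator $\operatorname{vol}(w_1,\dots,w_k)$ bounded away from zero, which your sketch needs but never arranges.

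The genuine gap is in the direction $\prod_j c_j(T)\le C\,V_k(T)$. Your $u_j$ are chosen biorthogonal \emph{in the domain}, and the only quantitative output is $\|Tu_j\|\ge c_\bullet(T)-\varepsilon$ (note the indexing is also off: $u_k$, chosen first with no constraint, only satisfies $\|Tu_k\|\ge c_1(T)-\varepsilon$, and in general your scheme yields $\|Tu_j\|\ge c_{k-j+1}(T)-\varepsilon$; the product is unchanged, so this part is harmless). But a lower bound $\det(T|_V)\ge C_k^{-1}\prod_j\|Tu_j\|$ requires the \emph{images} $Tu_1,\dots,Tu_k$ to be quantitatively spread out: the matrix of $T|_V$ in an Auerbach basis of $T(V)$ has no reason to be triangular, and domain-biorthogonality gives no control on $\operatorname{dist}\bigl(Tu_j,\operatorname{span}\{Tu_i\}_{i\neq j}\bigr)$, since a linear map can collapse angles completely (in Hilbert space the min--max rigidity of singular vectors saves you, but not in a general Banach space). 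The fix is to impose biorthogonality on the images: having chosen $u_1,\dots,u_{j-1}$, take norm-one functionals $\phi_i\in X^*$ with $\phi_i(Tu_i)=\|Tu_i\|$ and pick a unit vector $u_j\in\bigcap_{i<j}\ker(\phi_i\circ T)$, a closed complemented subspace of codimension at most $j-1$, so that $\|Tu_j\|\ge c_j(T)-\varepsilon$ with the correct index. Then $\phi_j$ vanishes on $Tu_{j+1},\dots,Tu_k$, hence $\operatorname{dist}\bigl(Tu_j,\operatorname{span}(Tu_{j+1},\dots,Tu_k)\bigr)\ge\|Tu_j\|$, and iterating the base-times-height inequality gives $\operatorname{vol}(Tu_1,\dots,Tu_k)\ge C_k^{-1}\prod_j(c_j(T)-\varepsilon)$ while $\operatorname{vol}(u_1,\dots,u_k)\le C_k$. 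This pull-back of the functionals through $T$ is the step your proposal is missing.
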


We shall also need the following auxiliary  result. 
\begin{lemma}\label{lemma: subadditive}
For every $k\in \N$, the map $T\to V_k(T)$ is continuous  on $\cB(X)$. Moreover, it is  also 
submultiplicative, i.e. 
\[
V_k(TS)\le V_k(T)V_k(S) \quad \text{for every $T, S\in \cB(X)$.}
\]
\end{lemma}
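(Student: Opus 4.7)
The plan is to establish the two assertions separately, with submultiplicativity coming from a chain-rule identity for $\det$ and continuity split into lower and upper semicontinuity.

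For submultiplicativity, I would first establish the chain rule
\[
\det((TS)|_V) = \det(T|_{SV}) \cdot \det(S|_V), \qquad V \in \cG_k(X),
\]
with both sides interpreted as $0$ whenever $S|_V$ fails to be injective. When $S|_V$ is injective, the image $W := SV$ lies in $\cG_k(X)$, and the identity follows from the change-of-variables formula for Haar measure applied to the factorization $(TS)|_V = T|_W \circ S|_V$, using the normalization $m_V(B_V) = m_W(B_W)$ on every $k$-dimensional subspace. Since $\det(T|_W) \leq V_k(T)$ and $\det(S|_V) \leq V_k(S)$, taking the supremum over $V$ gives $V_k(TS) \leq V_k(T) V_k(S)$.

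For continuity, lower semicontinuity follows once one checks that for each fixed $V \in \cG_k(X)$ the map $T \mapsto \det(T|_V)$ is continuous: fixing an Auerbach basis $v_1, \ldots, v_k$ of $V$, one expresses $\det(T|_V)$ as the $m_{TV}$-measure of the parallelpiped spanned by $\{Tv_i\}$ divided by $m_V(B_V)$, and both the image subspace $TV$ and the Haar measure $m_{TV}$ vary continuously with $T$ on the open set of operators for which $T|_V$ is injective. As a supremum of continuous functions, $V_k$ is then lower semicontinuous. The main work is upper semicontinuity, which I would derive from a uniform perturbation estimate
\[
\det(T|_V) \leq \det(T_0|_V) + C_k \|T-T_0\| \, (\|T\|\vee\|T_0\|)^{k-1}, \qquad V \in \cG_k(X),
\]
with $C_k$ depending only on $k$. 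Writing $T = T_0 + H$ and expanding the image parallelpiped $Tv_i = T_0 v_i + Hv_i$ by multilinearity --- as in the finite-dimensional identity $\det(T_0 + H) - \det(T_0) = \sum_{\emptyset \neq S \subseteq \{1,\dots,k\}}(\ldots)$ --- each mixed term is bounded by $\|H\|^{|S|}\|T_0\|^{k-|S|}$ times a constant coming from the Auerbach basis that depends only on $k$. Taking the supremum over $V$ and letting $T \to T_0$ then yields USC.

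The main obstacle is the uniformity-in-$V$ of the perturbation estimate. Unlike the Hilbert case where $V_k(T) = \|\Lambda^k T\|$ comes from a fixed alternating form, in a general Banach space the volume form defining $\det(T|_V)$ depends on the image subspace $TV$, and direct use of Lemma \ref{lemma: relation volume growth} would only give continuity up to the comparison constant $C$ from that result. Extracting genuine continuity requires combining the normalization $m_W(B_W) = m_V(B_V)$ for all $W \in \cG_k(X)$ with Auerbach's lemma, which supplies a basis with dual bounds depending only on $k$, and hence turns the multilinear expansion into a genuinely uniform estimate.
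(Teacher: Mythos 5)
Note first that the paper does not actually prove this lemma: continuity is quoted from \cite[Lemma 2.20]{B16} and submultiplicativity from \cite[Proposition 2.13(3)]{B16}, so you are supplying an argument the paper omits.

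Your submultiplicativity argument is correct and is the standard one. The chain rule $\det((TS)|_V)=\det(T|_{SV})\det(S|_V)$ holds because $E\mapsto m_{TW}(T(E))$ is a translation-invariant measure on $W=SV$, hence a constant multiple of $m_W$, the constant being $\det(T|_W)$ by the normalization; the degenerate cases are handled correctly by the convention $\det=0$, and $SV$ is automatically closed and complemented since it is finite-dimensional.

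The continuity half has a genuine gap at its core step. The ``multilinear expansion'' $\det(T_0+H)-\det(T_0)=\sum_{\emptyset\neq S}(\cdots)$ is not available here: the quantity being expanded is the normalized Haar (Busemann) volume $m_W(P(w_1,\dots,w_k))$ of the parallelepiped spanned by $w_i=Tv_i$, computed in the measure $m_W$ attached to $W=\operatorname{span}(w_1,\dots,w_k)$, and when $k<\dim X$ this is \emph{not} an alternating multilinear function of $(w_1,\dots,w_k)$ --- it is only positively homogeneous in each slot, because perturbing one edge vector changes the span and hence the normalizing measure. (Only for $k=\dim X$ does one recover a genuine determinant.) Auerbach's lemma, which you invoke to rescue the estimate, controls the coordinate functionals uniformly in $V$ but does nothing to restore additivity in each slot, so the claimed bound $\det(T|_V)\leq\det(T_0|_V)+C_k\|T-T_0\|(\|T\|\vee\|T_0\|)^{k-1}$ is not established; upper semicontinuity, which you correctly identify as the main work, is exactly the part left unproved. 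A secondary, smaller issue: your lower semicontinuity argument asserts that $m_{TV}$ ``varies continuously with $T$'' on the injective locus, which is believable but itself requires quantifying how the normalization constant of $m_{TV}$ depends on the subspace $TV$; this is the same difficulty in disguise. To repair the proof one must control the ratio of the two normalizations $m_{TV}$ and $m_{T_0V}$ on nearby $k$-dimensional subspaces (uniformly in $V$, with constants depending only on $k$), which is precisely the technical content of \cite[Lemma 2.20]{B16}; as written, your argument does not contain it.
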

\begin{proof}
The continuity of the map $T\to V_k(T)$ is established in~\cite[Lemma 2.20]{B16}, while the submultiplicativity property follows from \cite[Proposition 2.13(3)]{B16}.
\end{proof}
Finally, we observe that the Gelfand numbers are also continuous maps in $\cB(X)$.

\begin{lemma}\label{lem: gelfand Lipschitz}
For every $k\in \N$ and $T,S \in \cB(X)$,
\[|c_k(T)-c_k(S)|\leq \|T-S\|.\]
\end{lemma}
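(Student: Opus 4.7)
The plan is to prove the inequality by a direct application of the triangle inequality combined with the infimum definition of the Gelfand numbers. The statement is essentially that $c_k$ is a $1$-Lipschitz function on $\cB(X)$ with respect to the operator norm, and this is a classical fact whose proof I would model on the analogous argument for singular values on Hilbert space.

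First I would fix an arbitrary subspace $V \in \cG^{k-1}(X)$ and observe that for every unit vector $v \in V$,
\[
\|Tv\| \;\le\; \|Sv\| + \|(T-S)v\| \;\le\; \|S_{|V}\| + \|T-S\|.
\]
Taking the supremum over all unit $v \in V$ gives $\|T_{|V}\| \le \|S_{|V}\| + \|T-S\|$. Since this holds for every such $V$, passing to the infimum over $V \in \cG^{k-1}(X)$ on both sides yields
\[
c_k(T) \;=\; \inf_{V} \|T_{|V}\| \;\le\; \inf_V \|S_{|V}\| + \|T-S\| \;=\; c_k(S) + \|T-S\|.
\]
Interchanging the roles of $T$ and $S$ gives the reverse inequality, and the two together produce the desired bound $|c_k(T)-c_k(S)| \le \|T-S\|$.

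There is no genuine obstacle here: the only thing to be slightly careful about is that the set $\cG^{k-1}(X)$ over which the infimum is taken is the same for $T$ and for $S$ (it depends only on $X$ and $k$), so the second step of passing to the infimum is legitimate. Everything else is just the triangle inequality applied uniformly in $V$.
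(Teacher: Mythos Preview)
Your proof is correct and is essentially the same as the paper's: the paper simply asserts the subadditivity $c_k(T+S)\le c_k(T)+\|S\|$ as an observation and then applies it with $S$ replaced by $\pm(T-S)$, whereas you spell out that observation via the triangle inequality on each $V\in\cG^{k-1}(X)$ before passing to the infimum. The logical content is identical.
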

\begin{proof}
Observe that $c_k(T+S)\leq c_k(T)+\|S\|$ for any $T,S\in \cB(X)$. In particular,
\[c_k(T)=c_k(S-(S-T))\leq c_k(S)+\|T-S\|\]
and thus $c_k(T)- c_k(S)\leq \|T-S\|$. Similarly, $c_k(S)- c_k(T)\leq \|T-S\|$. These two observations combined imply the desired conclusion.
\end{proof}

\subsection{Characterization of dominated splittings} We now recall a characterization of dominated splitting for operator cocycles given in \cite[Theorem 2]{BM} which will play a fundamental role in our proof.

\begin{theorem}\label{thm:BM charact}
Let $f\colon M\to M$ be a homeomorphism of a compact topological space $M$ and $A \colon M  \to \cB^i(X)$ be a cocycle over $f$. Then the following statements are equivalent:
\begin{itemize}
\item
There exist constants $C>0$ and $\tau \in (0,1)$ such that
\begin{equation}\label{eq:gap theo charact}
 \max\{c_{k+1}(A^n(x)),c_{k+1}(A^n(f(x)))\} < C\tau^n c_{k}(A^{n + 1}(x))
 \end{equation}
for all $x \in M$ and $n \geq 1$;
\item
$A$ admits a dominated splitting of index $k$.
\end{itemize}
\end{theorem}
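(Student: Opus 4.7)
\medskip

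\noindent\textbf{Proof proposal.} The two implications of Theorem~\ref{thm:BM charact} have very different character, so the plan is to handle them separately, starting with the easier direction $(2)\Rightarrow(1)$. Assuming the splitting $X=\cE(x)\oplus\cF(x)$ with domination constants $C_0,\tau_0$, I would rely on three elementary observations: since $\cF(x)$ is closed of codimension $k$, one has $c_{k+1}(A^n(x))\le \|A^n(x)|_{\cF(x)}\|$ automatically; any codimension-$(k-1)$ subspace meets the $k$-dimensional $\cE(x)$ nontrivially, so $c_k(A^{n+1}(x))\ge m(A^{n+1}(x)|_{\cE(x)})$, where $m$ denotes the minimum norm; and the restriction $A(y)|_{\cE(y)}$ is a linear bijection between $k$-dimensional spaces whose norm and inverse-norm are uniformly bounded by continuity of $A$ and compactness of $M$. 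Feeding these into the domination estimate yields \eqref{eq:gap theo charact} with $\tau=\tau_0$ and a suitable constant $C$; the same argument shifted by one step covers the second term of the $\max$.

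For the converse $(1)\Rightarrow(2)$, the plan is to construct $\cE(x)$ and $\cF(x)$ as limits of near-optimizers of the Gelfand/volume-growth functionals. For each $x$ and $n$, I would take $\cE^{(n)}(x)\in \cG_k(X)$ almost realizing $V_k(A^n(x))$ from Lemma~\ref{lemma: relation volume growth} (a ``most-expanded'' $k$-plane) and $\cF^{(n)}(x)\in \cG^k(X)$ almost realizing $c_{k+1}(A^n(x))$ (a closed codimension-$k$ ``slow'' subspace). Combining the hypothesized gap with the submultiplicativity of $V_k$ (Lemma~\ref{lemma: subadditive}) and the Lipschitz estimate for Gelfand numbers (Lemma~\ref{lem: gelfand Lipschitz}), the goal is to show that both sequences are Cauchy in the opening (gap) metric on the Banach Grassmannian. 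Their limits then define $\cE(x)$ and $\cF(x)$; invariance is inherited from the functoriality of the optimization under composition with $A(x)$, continuity in $x$ follows from an analogous Cauchy estimate in the base variable, and the domination inequality is read off directly from the converged ratio $c_{k+1}(A^n(x))/c_k(A^{n+1}(x))$.

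The main obstacle is precisely this Cauchyness in the infinite-dimensional Banach setting: without an orthogonal decomposition there is no singular-value shortcut, so one must argue directly with the opening metric, controlling the angle between near-optimal subspaces using only the multiplicative gap $c_{k+1}(A^n(x))/c_k(A^{n+1}(x))\le C\tau^n$. The feature of \eqref{eq:gap theo charact} that controls $c_{k+1}$ at \emph{both} $x$ and $f(x)$, rather than at $x$ alone, is what compensates for the missing backward invertibility of $A$ and lets the Cauchy estimate iterate one step forward at a time. This is the point at which the Banach-space volume-growth and opening-metric machinery of~\cite{B16,BM} becomes indispensable, and I would organize the proof so as to isolate that single step.
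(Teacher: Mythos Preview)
This theorem is not proved in the paper at all: it is stated in Section~\ref{sec: prelim} as a known result, quoted verbatim as \cite[Theorem~2]{BM} and used as a black box in the proofs of Theorems~\ref{thm: main quasi-compact} and~\ref{thm: main narrow}. There is therefore no ``paper's own proof'' to compare your proposal against; the paper simply invokes the result once the estimate~\eqref{eq:gap theo charact} has been verified.

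That said, your sketch is a reasonable outline of the argument in~\cite{BM}. The direction $(2)\Rightarrow(1)$ is indeed elementary along the lines you indicate. For $(1)\Rightarrow(2)$ your plan---constructing $\cE(x)$ as limits of near-maximizing $k$-planes for $V_k$ and $\cF(x)$ as limits of near-minimizing codimension-$k$ subspaces for $c_{k+1}$, and showing Cauchyness in the opening metric from the gap condition---is the right shape, and you correctly identify the role of the two-point condition in~\eqref{eq:gap theo charact} as a substitute for backward invertibility. What your outline underplays is that establishing Cauchyness of the slow subspaces $\cF^{(n)}(x)$ in a general Banach space (no orthogonality, no guaranteed complements) is the genuinely delicate part of~\cite{BM}; it is not a routine consequence of Lemmas~\ref{lemma: relation volume growth}--\ref{lem: gelfand Lipschitz} and requires the specific Grassmannian estimates developed there. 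If you intend to reprove the theorem rather than cite it, that is the step you would need to flesh out in full.
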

We observe that a similar result in the case of cocycles taking values in $GL(d,\R)$ was previously obtained in \cite{BG}. Moreover, in this particular setting, condition \eqref{eq:gap theo charact} is equivalent to the, a priori, simpler one
\begin{equation}\label{eq: gap finite dim}
c_{k+1}(A^n(x)) < C\tau^n c_{k}(A^n (x)).
\end{equation}
On the other hand, in the context of Theorem \ref{thm:BM charact}, condition \eqref{eq:gap theo charact} can not be replaced by \eqref{eq: gap finite dim}, as observed in \cite[Example 3]{BM}.

\subsection{Mutliplicative Ergodic Theorem} In this section we present a version of the Multiplicative Ergodic Theorem adapted to our setting. In particular, we do not state it in its most general form. The statement as we present it is a combination of the main results in \cite{B16,FLQ13,GTQ}. 

\begin{theorem}\label{theo: MET}
Let $f\colon M\to M$ be a homeomorphism of a compact metric space $(M,d)$, $\mu\in\cM_{erg}(f)$ an ergodic measure and $A \colon M  \to \cB(X)$ a continuous cocycle over $f$.
Then, there exists an $f$-invariant set $\cR^\mu \subset M$ with $\mu(\cR^\mu )=1$ such that the following holds:
\begin{enumerate}
\item for each $x \in \cR^\mu $, the $q$-dimensional volume growth rates
\begin{equation}\label{eq: def of lq}
l_q (\mu) = \lim_{n \to \infty} \frac{1}{n} \log V_q(A^n(x))
\end{equation}
exist and are constant over $\cR^\mu $;
\item defining the sequence $(\zeta_q(\mu))_{q \geq 1}$ by $\zeta_1(\mu) = l_1(\mu)$ and $\zeta_q(\mu) = l_q(\mu) - l_{q-1}(\mu)$ for $q > 1$, we have that 
\[\zeta_1(\mu) \geq \zeta_2(\mu) \geq \cdots \quad \text{ and } \lim_{q\to +\infty} \zeta_q(\mu) = \kappa(\mu).\]
Let us denote by $(\xi_j(\mu))_{j=1}^{t}$ with $t=t(\mu)\in \N\cup \{+\infty\}$ the distinct values (in decreasing order) of the sequence $\{\zeta_q(\mu)\}_q$ for which $\zeta_q(\mu)>\kappa(\mu)$;

\item for each $x\in \Reg$ and $i\in \mathbb N \cap [1, t]$, there is a unique and measurable decomposition
\begin{equation*}\label{os}
 X=\bigoplus_{j=1}^i E_j(x) \oplus V_{i+1}(x),
\end{equation*}
where $E_j(x)$ are  finite-dimensional subspaces of $X$ and $A(x)E_j(x)=E_j(f(x))$. Furthermore, $V_{i+1}(x)$ are closed subspaces of $X$ and $A(x)V_{i+1}(x)\subset V_{i+1}(f(x))$;
\item for each  $x\in \Reg$  and   $v\in E_j(x)\setminus \{0\}$, we have 
\[
 \lim_{n\to \infty} \frac 1 n \log \lVert A^n(x)v\rVert =\xi_j(\mu).
\]
In addition, for every $v\in V_{i+1}(x)$,
\[
 \limsup_{n\to \infty} \frac 1 n \log \lVert A^n(x)v\rVert \le \xi_{i+1}(\mu).
\]
 \end{enumerate}
The numbers $\xi_i(\mu)$ are called  \emph{exceptional Lyapunov exponents} of the cocycle $A$ with respect to $\mu$ and the dimensions $d_i(\mu)=\dim E_i(x)$ are called \emph{multiplicities} of $\xi_i(\mu)$. In addition, the decomposition~\eqref{os} is called the \emph{Oseledets splitting}.
\end{theorem}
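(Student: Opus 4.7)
The plan is to produce the MET in three stages: first extract the volume growth exponents from subadditivity; then use the Gelfand--volume comparison to get the structure of the sequence $(\zeta_q(\mu))_q$ and its asymptotic relation to $\kappa(\mu)$; finally construct the Oseledets filtration and the fast equivariant subspaces, which is where the semi-invertibility has to be handled delicately.

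First I would establish existence of the limits $l_q(\mu)$. By Lemma \ref{lemma: subadditive}, $V_q$ is submultiplicative, so $n\mapsto \log V_q(A^n(x))$ is a subadditive cocycle over $(f,\mu)$. Since $M$ is compact and $A$ is continuous, $\log^+ V_q(A(\cdot))\leq q\log^+\|A(\cdot)\|$ is bounded, hence integrable. Kingman's subadditive ergodic theorem then yields, for each fixed $q$, an $f$-invariant full-measure set on which the limit in \eqref{eq: def of lq} exists and is constant (equal to $l_q(\mu)$, possibly $-\infty$). Taking a countable intersection over $q\in\mathbb{N}$ produces the set $\cR^\mu$ of item (1). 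Monotonicity $\zeta_{q+1}(\mu)\leq \zeta_q(\mu)$ is the content of item (2) and it follows from Lemma \ref{lemma: relation volume growth}: one has
\[
\log V_q(A^n(x))=\sum_{j=1}^q \log c_j(A^n(x))+O(1),
\]
so $\zeta_q(\mu)=\lim_n\tfrac{1}{n}\log c_q(A^n(x))$, and $c_{q+1}\leq c_q$ pointwise forces the same inequality in the limit.

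Next I would prove $\lim_q\zeta_q(\mu)=\kappa(\mu)$. The key input is Nussbaum's formula, which gives $\|T\|_{\mathrm{ic}}=\lim_{k\to\infty}c_k(T)$ for every $T\in\cB(X)$; in particular the monotone limit $\xi_\infty(\mu):=\lim_q\zeta_q(\mu)$ is bounded above by $\kappa(\mu)$ because $c_q(A^n(x))\geq \|A^n(x)\|_{\mathrm{ic}}$ for every $q$. For the reverse inequality, standard quantitative comparison of $c_q$ with $\|\cdot\|_{\mathrm{ic}}$ (pick, for each large $n$, a $q=q(n)$ below which Gelfand numbers remain close to each other and above which they are close to $\|A^n(x)\|_{\mathrm{ic}}$) combined with the Kingman formula for $\kappa(\mu)$ forces $\xi_\infty(\mu)\geq \kappa(\mu)$. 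The distinct exceptional values $\xi_j(\mu)>\kappa(\mu)$ with multiplicities $d_j(\mu)$ are then just the jumps of the non-increasing sequence $\zeta_q(\mu)$ above $\kappa(\mu)$; that there are only finitely or countably many of them is automatic.

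The hardest part is item (3): the equivariant splitting $X=\bigoplus_{j\leq i}E_j(x)\oplus V_{i+1}(x)$ for $i\leq t(\mu)$. The slow filtration is easy to define once the exponents are in hand, namely
\[
V_{i+1}(x):=\bigl\{v\in X:\ \limsup_{n\to\infty}\tfrac{1}{n}\log\|A^n(x)v\|\leq \xi_{i+1}(\mu)\bigr\},
\]
which is automatically closed and forward-invariant, and this directly gives the second inequality of item (4). The real work is the fast subspaces $E_j(x)$: because $A(x)$ need not be invertible, one cannot define $E_1(x)\oplus\cdots\oplus E_i(x)$ by backward contraction from the present fiber. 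Following the strategy of Gonz\'alez-Tokman--Quas and Blumenthal, I would construct a candidate finite-dimensional space $W^{(n)}(x)$ of dimension $D_i:=d_1(\mu)+\cdots+d_i(\mu)$ as the span of the top $D_i$ right singular directions of $A^n(f^{-n}(x))$, pushed forward to $x$. Using the spectral gap $\xi_i(\mu)>\xi_{i+1}(\mu)$ and Lemma \ref{lem: gelfand Lipschitz}, these finite-dimensional subspaces form a Cauchy sequence in the Grassmannian $\cG_{D_i}(X)$ (whose relevant pieces are compact thanks to finite dimension); the limit is the desired sum $\bigoplus_{j\leq i}E_j(x)$. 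Equivariance under $A$ follows from the cocycle identity and uniqueness of the limit, and the individual blocks $E_j(x)$ are isolated by repeating the same construction with consecutive values of $i$ and taking orthogonal-like complements inside these finite-dimensional subspaces. The growth identity $\lim_n\tfrac{1}{n}\log\|A^n(x)v\|=\xi_j(\mu)$ for $v\in E_j(x)\setminus\{0\}$ is then obtained by combining the upper bound inherited from $V_j(x)\supset E_j(x)\oplus\cdots$ with the lower bound coming from the gap $\xi_j(\mu)>\xi_{j+1}(\mu)$ and the fact that $A^n(x)|_{E_j(x)}$ is a bijection onto $E_j(f^n(x))$ with controlled co-norm. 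The main obstacle throughout is the Cauchy-in-Grassmannian estimate, whose proof requires quantitative control (in the spirit of Oseledets' "regular point" estimates) on the angles between the leading $D_i$-dimensional singular subspaces of $A^n(f^{-n}(x))$ and of $A^{n+1}(f^{-n-1}(x))$; this is the technical heart of \cite{B16,FLQ13,GTQ} and is what I would cite rather than reprove.
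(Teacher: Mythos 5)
The paper does not actually prove this theorem: it is imported as ``a combination of the main results in \cite{B16,FLQ13,GTQ}'', and your sketch follows exactly the standard argument of those references (Kingman for the existence of the $l_q(\mu)$, the comparison of Lemma \ref{lemma: relation volume growth} to identify $\zeta_q(\mu)$ with the growth rate of $c_q(A^n(x))$ and get monotonicity, and the singular-subspace/Grassmannian-limit construction of the fast spaces), deferring the genuinely hard Cauchy-in-Grassmannian estimate to the same sources, so the two routes coincide. One small correction worth recording: Nussbaum's formula is the identity $\res(T)=\lim_{n\to\infty}\lVert T^n\rVert_{\text{ic}}^{1/n}$, not $\lVert T\rVert_{\text{ic}}=\lim_{k\to\infty}c_k(T)$; the Gelfand numbers and $\lVert\cdot\rVert_{\text{ic}}$ are comparable only up to multiplicative constants (the lower bound for $\lVert T\rVert_{\text{ic}}$ in terms of $\lVert T|_V\rVert$ costs the norm of a projection onto the chosen finite-codimensional subspace), which is harmless for the exponential growth rate $\kappa(\mu)$ but should not be asserted as an equality.
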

As observed in \cite[Remark 3.3]{B16}, there may be only finitely many distinct values of $\zeta_q(\mu)$ and, in this case, the last of these numbers is equal to $\kappa(\mu)$.

\section{Proof of Theorems \ref{thm: main quasi-compact} and \ref{thm: main narrow}}\label{sec: proof theo compact}
In this section we present the proofs of Theorems \ref{thm: main quasi-compact} and \ref{thm: main narrow} starting with the first one. 

\subsection{Proof of Theorem \ref{thm: main quasi-compact}} 
In order to prove Theorem \ref{thm: main quasi-compact}, all we have to do is to show that the hypothesis of the aforementioned theorem implies that condition \eqref{eq:gap theo charact} is satisfied. We split the proof in several auxiliary results. In what follows, we will use the letter $C$ to denote a positive constant that may differ in each step. Given $q\in \N$ and $\mu\in \cM_{erg}(f)$, recall the definition of $l_q(\mu)$ given in \eqref{eq: def of lq}.

\begin{lemma} \label{lem: infinite} 
Let $\mu\in \cM_{erg}(f)$ and $q\leq \dim X $.
If $\{\mu _j \}_{j\in \N}$ is sequence of ergodic $f$-invariant probability measures converging in the weak$^{\ast}$ topology to $\mu$, then
\begin{displaymath}
	\limsup _{j\to \infty} l_q(\mu _j) \leq l_q(\mu).
\end{displaymath}
\end{lemma}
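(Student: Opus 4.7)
My plan is to combine the infimum representation of $l_q$ coming from Kingman's subadditive ergodic theorem with the continuity of the relevant integrand, so that the weak$^*$ convergence $\mu_j \to \mu$ controls the upper limit of $l_q(\mu_j)$. This is the standard route to upper semi-continuity of top-$q$ Lyapunov-type quantities.

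First, I would promote the submultiplicativity of $V_q$ provided by Lemma \ref{lemma: subadditive} to subadditivity of the potential $\phi_n(x) := \log V_q(A^n(x))$ along $f$-orbits, i.e.\ $\phi_{n+m}(x) \leq \phi_n(x) + \phi_m(f^n x)$. Kingman's subadditive ergodic theorem then yields, for every ergodic $\nu \in \cM_{erg}(f)$, the infimum formula
$$l_q(\nu) = \inf_{N \geq 1} \frac{1}{N} \int \phi_N \, d\nu.$$
In particular $l_q(\mu_j) \leq \tfrac{1}{N} \int \phi_N \, d\mu_j$ for every $j$ and every $N\geq 1$.

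Next, I would check that each $\phi_N$ is a bounded continuous function on $M$. Continuity of $V_q$ (Lemma \ref{lemma: subadditive}) composed with the continuous map $A^N \colon M \to \cB(X)$ makes $V_q \circ A^N$ continuous. The hypothesis $A(M) \subseteq \cB^i(X)$ ensures that $A^N(x)$ is injective for every $x$, so $V_q(A^N(x)) > 0$ pointwise, and compactness of $M$ then yields a uniform positive lower bound; combined with the easy upper bound $V_q(T) \leq C\lVert T \rVert^q$ this gives $\phi_N \in C(M)$ with bounded values.

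The conclusion then follows by a standard $\varepsilon$-argument: fix $\varepsilon > 0$, pick $N$ with $\tfrac{1}{N} \int \phi_N \, d\mu < l_q(\mu) + \varepsilon$ (or below an arbitrary threshold $-K$ in the degenerate case $l_q(\mu) = -\infty$), apply weak$^*$ convergence to the continuous bounded function $\phi_N$ to obtain $\tfrac{1}{N}\int \phi_N \, d\mu_j < l_q(\mu) + 2\varepsilon$ for $j$ large, and deduce $l_q(\mu_j) < l_q(\mu) + 2\varepsilon$ eventually; letting $\varepsilon \to 0$ gives the claim. I do not anticipate any serious obstacle here; the only delicate point is the positivity of $V_q \circ A^N$ needed to pass to logarithms, and that is exactly what the injectivity hypothesis $A(M) \subseteq \cB^i(X)$ supplies.
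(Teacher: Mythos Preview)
Your argument is correct and is in fact cleaner than the paper's, but it relies on an extra hypothesis that the paper deliberately avoids. You use $A(M)\subseteq \cB^i(X)$ to guarantee that $V_q(A^N(x))>0$ everywhere, so that $\phi_N=\log V_q(A^N(\cdot))$ is a genuine bounded continuous function and weak$^*$ convergence applies directly. The paper instead allows $V_q(A^n(x))$ to vanish (so $\varphi_n$ is only upper semicontinuous, possibly $-\infty$) and handles this by truncating: it sets $\varphi_{n,m}=\max\{\varphi_n,-m\}$, which \emph{is} continuous, passes the weak$^*$ limit through $\varphi_{n,m}$, and then lets $m\to\infty$ via monotone convergence.

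What each approach buys: yours is a two-line proof once you observe $\phi_N\in C(M)$, and it is perfectly adequate for the ambient hypotheses of Theorem~\ref{thm: main quasi-compact}. The paper's truncation argument, however, makes Lemma~\ref{lem: infinite} hold for arbitrary continuous $A\colon M\to\cB(X)$ with no injectivity assumption; this extra generality is not idle, since Remark~\ref{rem: injetcitvity hyp} records that all auxiliary results up to Lemma~\ref{lem: upper lower bound ck general} are valid without injectivity, and Corollary~\ref{cor: conv ck} explicitly invokes them for cocycles $A\colon M\to\cB_0(X)$ that are merely compact, not injective. So your shortcut is fine for the main theorem but would leave a gap in Section~\ref{sec: applications}.
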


\begin{proof}
For each $n\in\N$, let $\varphi _n \colon M\to [-\infty ,+\infty )$ be the map given by
\begin{displaymath}
	\varphi _n (x)=\dfrac{1}{n}\log V_q(A^n(x)).
\end{displaymath}
Then, since $\mu$ is ergodic, $\varphi _1^+ (x)=\max \{0,\varphi _1(x)\} \in \mbox{L}^1(\mu)$ and $\{n\varphi _n\}_{n\in \N}$ is a subadditive sequence (recall Lemma \ref{lemma: subadditive}), it follows by Kingman's Subadditive Theorem that
\begin{displaymath}
	l_q (\mu)=\lim _{n\to \infty} \varphi _n(x)=\inf _n \int \varphi _n (x)d\mu
\end{displaymath}
for $\mu$ almost every $x\in M$.
Similarly,
\begin{displaymath}
	l_q(\mu _j)=\inf _n \int \varphi _n (x)d\mu _j.
\end{displaymath}
Therefore, in order to complete the proof it is enough to show that
\begin{equation*}
\limsup _{j\to +\infty} \inf _n \int \varphi _n (x)d\mu _j\leq l_q(\mu).
\end{equation*}

For $m\in \N$, let us consider $\varphi _{n,m}\colon M\to (-\infty, +\infty)$ given by
\begin{displaymath}
	\varphi _{n,m}(x)=\left\{\begin{array}{cc}
		\varphi _n(x) & \mbox{if} \quad \varphi _n(x)\geq -m\\
		-m & \mbox{if}\quad \varphi _n(x)< -m.
	\end{array}
	\right.
\end{displaymath}
It follows easily from the definition and Lemma \ref{lemma: subadditive} that, for every $m,n\in \N$, $\varphi _{n,m}:M\to (-\infty, +\infty)$ is a continuous function. Moreover, 
\begin{displaymath}
\varphi _{n,m}(x)\geq \varphi _{n,m+1}(x) \quad \mbox{and} \quad \varphi _{n,m}(x)\xrightarrow{m\to +\infty} \varphi _{n}(x)
\end{displaymath}
for every $m,n\in \N$ and $x\in M$. 

By the Monotone Convergence Theorem we get
\begin{displaymath}
\int 	\varphi _n d\mu = \lim _{m\to \infty} \int \varphi _{n,m} d\mu =\inf _{m} \int \varphi _{n,m} d\mu.
\end{displaymath}
On the other hand, since $\mu _j \xrightarrow{w^{\ast}}\mu$ as $j$ goes to infinite and $\varphi _{n,m}$ is continuous, we have
\begin{displaymath}
	\int \varphi _{n,m}d\mu =\lim _{j\to \infty} \int \varphi _{n,m}d\mu _j \quad \forall m\in \N.
\end{displaymath}
Thus, combining these observations we get that
\begin{equation*}
\inf _n \int \varphi _n d\mu =\inf _n \left\{\inf _m \left[ \lim _{j\to \infty} \int \varphi _{n,m} d\mu _j \right]\right\}.
\end{equation*}
Now, for each $j$ and $m$ in $\N$ we have $\int \varphi _{n,m} d\mu _j \geq \inf _m\left\{ \int \varphi _{n,m} d\mu _j\right\}$ and thus
\begin{displaymath}
	\lim _{j\to \infty}\int \varphi _{n,m} d\mu _j \geq \limsup _{j\to \infty} \inf _m\left\{ \int \varphi _{n,m} d\mu _j\right\}
\end{displaymath}
for every $m\in \N$. Consequently,
\begin{equation}
\inf _m \left\{ \lim _{j\to \infty} \int \varphi _{n,m} d\mu _j \right\} \geq  \limsup _{j\to \infty} \left\{ \inf _m \int \varphi _{n,m} d\mu _j \right\}.
\label{eq: infinite LE estimative 1}
\end{equation}
Moreover, once again by the Monotone Convergence Theorem, 
\begin{equation*}
 \limsup _{j\to \infty} \left\{ \inf _m \int \varphi _{n,m} d\mu _j \right\}= \limsup _{j\to \infty}  \int \varphi _n d\mu _j.
\end{equation*}
Finally, proceeding as in \eqref{eq: infinite LE estimative 1} we get that 
\begin{equation*}
\inf _n \left\{ \limsup _{j\to \infty}  \int \varphi _n d\mu _j\right\} \geq \limsup _{j\to \infty} \inf _n \int \varphi _n d\mu _j
\end{equation*}
which combined with previous observations implies that 
\begin{displaymath}
	l_q(\mu)=\inf _n \int \varphi _n d\mu \geq \limsup _{j\to \infty} \inf _n \int \varphi _n d\mu _j
\end{displaymath}
completing the proof of the lemma.
\end{proof}

\begin{remark}
Note that the previous result holds even in the case when $l_q(\mu)=-\infty$.
\end{remark}

\begin{corollary}\label{cor: lk+1 finite}
For every $0\leq q\leq k+1$,
\[l_{q}(\mu)\geq \lambda_1+\ldots+\lambda_q \]
for every $\mu \in \cM_{erg}(f)$.
\end{corollary}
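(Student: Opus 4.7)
The plan is to exploit the upper semicontinuity of $\mu\mapsto l_q(\mu)$ given by Lemma~\ref{lem: infinite}, combined with a weak-$\ast$ approximation of an arbitrary ergodic $\mu$ by periodic measures $\mu_p$ on which $l_q(\mu_p)$ can be computed directly from the constant periodic data hypothesis. The case $q=0$ is trivial (both sides vanish), so fix $1\leq q\leq k+1$.

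First, for $p\in\Fix(f^n)$ set $\mu_p=\frac{1}{n}\sum_{j=0}^{n-1}\delta_{f^j(p)}$. Hypothesis~\eqref{eq: lambda k+1 bigger kappa mu} gives $\kappa(\mu_p)<\lambda_{k+1}$, so by Section~\ref{sec: quasicompact cocycles} the operator $A^n(p)$ is quasi-compact with $\res(A^n(p))=e^{n\kappa(\mu_p)}<e^{n\lambda_{k+1}}$. Choose $\rho$ with $e^{n\kappa(\mu_p)}<\rho<e^{n\lambda_{k+1}}$ and apply the Riesz--Kato decomposition from Section~\ref{sec: quasi-compact}: $X=Y_\rho\oplus Z_\rho$ with $\dim Y_\rho<\infty$, and the top $k+1$ exceptional eigenvalues $\gamma_1(p),\ldots,\gamma_{k+1}(p)$, of moduli $e^{n\lambda_1},\ldots,e^{n\lambda_{k+1}}$ by the $(k+1)$-constant periodic data hypothesis, lie in $\sigma(A^n(p)|_{Y_\rho})$. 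A direct finite-dimensional computation on $Y_\rho$ (together with Theorem~\ref{theo: MET} applied to $(A,f,\mu_p)$) then identifies $\lambda_1,\ldots,\lambda_{k+1}$, counted with their algebraic multiplicities, as the top exceptional Lyapunov exponents of $A$ at $\mu_p$, and hence
\begin{equation*}
l_q(\mu_p)=\sum_{i=1}^{q}\lambda_i,\qquad 0\leq q\leq k+1.
\end{equation*}

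Second, fix an arbitrary ergodic $\mu$ and pick a point $x\in M$ which is simultaneously $\mu$-generic (in the Birkhoff sense) and Poincar\'e-recurrent; this is a set of full $\mu$-measure. Select $n_j\to\infty$ with $d(f^{n_j}(x),x)\to 0$ and apply the closing property~\eqref{eq: closing property} to obtain periodic points $p_j$ of period $n_j+m_j$, $0\leq m_j\leq l$, with $d(f^i(x),f^i(p_j))\leq c\,d(f^{n_j}(x),x)\to 0$ uniformly in $0\leq i\leq n_j$. Since $m_j$ is bounded and continuous functions on the compact space $M$ are uniformly continuous, the empirical measures $\mu_{p_j}$ and $\frac{1}{n_j}\sum_{i=0}^{n_j-1}\delta_{f^i(x)}$ have the same weak-$\ast$ limit; Birkhoff genericity of $x$ identifies this limit as $\mu$, so $\mu_{p_j}\xrightarrow{w^{\ast}}\mu$. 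Lemma~\ref{lem: infinite} and the first step then yield
\begin{equation*}
l_q(\mu)\geq \limsup_{j\to\infty} l_q(\mu_{p_j})=\sum_{i=1}^{q}\lambda_i,
\end{equation*}
completing the proof. The main delicate point is the first step, namely identifying the top $k+1$ exceptional Lyapunov exponents of $(A,\mu_p)$ with the log-moduli of the top $k+1$ eigenvalues of $A^n(p)$; once this is achieved through the Riesz--Kato decomposition, the closing-property approximation is routine.
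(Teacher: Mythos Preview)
Your proof is correct and follows essentially the same route as the paper: compute $l_q(\mu_p)=\sum_{i\le q}\lambda_i$ on periodic measures via the constant periodic data hypothesis and Theorem~\ref{theo: MET}, approximate an arbitrary ergodic $\mu$ by periodic measures using the closing property, and conclude with the upper semicontinuity from Lemma~\ref{lem: infinite}. The only cosmetic differences are that the paper phrases the final step as a contradiction and takes the weak-$\ast$ approximation of $\mu$ by periodic measures as a known consequence of the closing property, whereas you spell out both of these explicitly.
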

\begin{proof}
Suppose there exist $0\leq q\leq k+1$ and $\mu \in \cM_{erg}(f)$ such that $l_{q}(\mu)< \lambda_1+\ldots+\lambda_q$. 

Since $f$ has the closing property, we know that $\mu$ may be approximated by periodic measures in the weak$^\ast$ topology. That is, there exists a sequence of positive integers $(n_j)_{j\in \mathbb{N}}$ and periodic points $(p_j)_{j\in \N}$ with $f^{n_j}(p_j)=p_j$ for every $j\in \mathbb{N}$ such that the ergodic periodic measures given by 
\[\mu _{p_j} =\dfrac{1}{n_j}\sum _{i=0}^{n_j-1}\delta _{f^i(p_j)}\]
converge to $\mu$ in the weak$^*$ topology. Consequently, by Lemma \ref{lem: infinite}, it follows that 
\[\limsup_{j\to +\infty} l_{q}(\mu_j)<\lambda_1+\ldots+\lambda_q.\]
On the other hand, since $A$ has $(k+1)$-constant periodic data of type $(\lambda_1,\ldots,\lambda_k,\lambda_{k+1})$, it follows that $\lambda_1,\ldots,\lambda_k$ and $\lambda_{k+1}$ are the $(k+1)$-largest Lyapunov exponents of $(A,f,\mu_{p_j})$. Thus, by Theorem \ref{theo: MET}, we have that
\[\lambda_1+\lambda_2+\cdots+\lambda_{q}=\lim_{n\to +\infty}\frac{1}{n}\log V_{q}(A^n(p_j))=l_{q}(\mu_j)\]
for every $j\in \N$. Combining these observations we get a contradiction which proves the corollary.
\end{proof}

\begin{corollary}\label{cor: lyap exp limit vol growth}
The cocycle $A$ is quasi-compact with respect to every ergodic measure $\mu\in \cM(f)$. Moreover, the $(k+1)$-largest Lyapunov exponents of $A$ with respect to any such $\mu$ are $\lambda_1,\ldots,\lambda_k$ and $\lambda_{k+1}$ and
\[\lambda_1+\lambda_2+\cdots+\lambda_{q}=l_q(\mu)=\lim_{n\to +\infty}\frac{1}{n}\log V_{q}(A^n(x))\]
for $\mu$-almost every $x\in M$ and every $1\leq q\leq k+1$. 
\end{corollary}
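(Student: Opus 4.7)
The statement groups three claims about an arbitrary $\mu\in\cM_{erg}(f)$: quasi-compactness of $A$ with respect to $\mu$, identification of the top $k+1$ exceptional Lyapunov exponents as $\lambda_1,\ldots,\lambda_{k+1}$, and the volume-growth formula $l_q(\mu)=\lambda_1+\cdots+\lambda_q$ for $1\le q\le k+1$. I would establish them in that order, using Corollary~\ref{cor: lk+1 finite}, the MET, and a matching upper bound obtained by periodic shadowing.

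Quasi-compactness is direct from Corollary~\ref{cor: lk+1 finite} applied with $q=1$: it yields $\lambda(\mu)=l_1(\mu)\ge\lambda_1\ge\lambda_k>\lambda_{k+1}>\sup_\nu\kappa(\nu)\ge\kappa(\mu)$, so $\kappa(\mu)<\lambda(\mu)$, which is the definition of quasi-compactness with respect to $\mu$.

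Next, applying Theorem~\ref{theo: MET} under the quasi-compactness just established produces the non-increasing sequence $\zeta_q(\mu)=l_q(\mu)-l_{q-1}(\mu)$ with $\zeta_q(\mu)\to\kappa(\mu)$ and $l_q(\mu)=\sum_{i\le q}\zeta_i(\mu)$. Corollary~\ref{cor: lk+1 finite} supplies the one-sided estimate $\sum_{i\le q}\zeta_i(\mu)\ge\sum_{i\le q}\lambda_i$ for every $q\le k+1$. Combined with the monotonicity of $(\zeta_i)$ and the convergence $\zeta_q(\mu)\to\kappa(\mu)<\lambda_{k+1}$, matching upper bounds $l_q(\mu)\le\lambda_1+\cdots+\lambda_q$ would force $\zeta_i(\mu)=\lambda_i$ for $1\le i\le k+1$ and close both remaining claims.

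Producing those upper bounds is the crux, and the main obstacle. Lemma~\ref{lem: infinite} delivers only the upper semicontinuity of $l_q$, which already gave the lower bound; no dual lower-semicontinuity statement is at our disposal. My plan is a shadowing argument in the spirit of Kalinin: for $\mu$-typical $x$ and each large $n$, the closing property yields a periodic point $p=p(x,n)$ with period $n+j$, $0\le j\le l$, shadowing $x,f(x),\ldots,f^n(x)$ with exponentially decaying error as in~\eqref{eq: closing property}. Telescoping the H\"older estimate $\|A(f^i(x))-A(f^i(p))\|\le C d(f^i(x),f^i(p))^\alpha$ along the segment, together with the geometric sum from~\eqref{eq: closing property} and the submultiplicativity of $V_q$ in Lemma~\ref{lemma: subadditive}, should yield an absolute constant $K$, independent of $x,p,n$, controlling $V_q(A^n(x))$ by $K\, V_q(A^n(p))$ for each $q\le k+1$. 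On the periodic side, since $A^{n+j}(p)$ is quasi-compact with prescribed top eigenvalue moduli $e^{(n+j)\lambda_i}$, Gelfand's formula applied to its iterates combined with Lemma~\ref{lemma: relation volume growth} gives $\limsup_n\tfrac1n\log V_q(A^n(p))\le \lambda_1+\cdots+\lambda_q$. Passing to the limit along a $\mu$-generic $x$ then yields the required upper bound and completes the proof.
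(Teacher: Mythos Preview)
Your quasi-compactness argument and the lower bound $l_q(\mu)\ge\lambda_1+\cdots+\lambda_q$ via Corollary~\ref{cor: lk+1 finite} are correct and agree with the paper. The gap is in the upper bound.

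The shadowing step, as you describe it, does not deliver a constant $K$ with $V_q(A^n(x))\le K\,V_q(A^n(p))$ uniformly in $n$. Telescoping $A^n(x)-A^n(p)$ along the orbit and using the H\"older bound together with~\eqref{eq: closing property} controls $\|A^n(x)-A^n(p)\|$ only up to a factor $e^{\eta n}$, where $e^\eta=\sup_x\|A(x)\|$; since $V_q(A^n(p))$ can be as small as $e^{(\lambda_1+\cdots+\lambda_q)n}$, this gives no multiplicative comparison. (The paper eventually proves a comparison of this flavor in Lemma~\ref{lem: upper lower bound ck general}, but only over a short initial segment $A^{\lfloor\gamma n\rfloor}$ with $\gamma$ small, and crucially only \emph{after} the present corollary and its consequence Lemma~\ref{lem: upper bound Vk} are available; using that argument here would be circular.) There is a second problem on the periodic side: Gelfand's formula is asymptotic in the number of iterates of a \emph{fixed} operator, whereas your periodic point $p=p(x,n)$ varies with $n$. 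Nothing prevents $V_q(A^n(p))$ from exceeding $e^{(\lambda_1+\cdots+\lambda_q)n}$ by an amount that depends on the (uncontrolled) Jordan structure of $A^{n+j}(p)$, so you cannot pass to the limit uniformly in $p$.

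The paper bypasses both issues by quoting \cite[Theorem~2.5]{BD}. Once Corollary~\ref{cor: lk+1 finite} gives $\zeta_q(\mu)\ge\lambda_q>\kappa(\mu)$, the exponent $\zeta_q(\mu)$ is exceptional, and \cite{BD} then guarantees it is a limit of $q$-th Lyapunov exponents along periodic measures, all of which equal $\lambda_q$ by hypothesis; hence $\zeta_q(\mu)=\lambda_q$. This is run recursively in $q$. In other words, the matching upper bound is obtained not by a direct orbit-to-orbit comparison of $V_q$ but by invoking the periodic-approximation theorem for exceptional exponents in the Banach setting; the Kalinin-style shadowing is hidden inside \cite{BD}, where the technicalities of the non-invertible infinite-dimensional case are handled.
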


\begin{proof} Let $\mu\in \cM_{erg}(f)$.
Recall that $c_1(A^n(x))=\|A^n(x)\|$. Then, using Lemma \ref{lemma: relation volume growth}, we get that
\[\begin{split}
\lambda(\mu)=\lim_{n\to +\infty}\frac{1}{n}\log c_{1}(A^n(x))
=\lim_{n\to +\infty}\frac{1}{n}\log V_{1}(A^n(x))
=l_1(\mu)
\end{split}\]
for $\mu$-almost every $x\in M$. 
In particular, by Corollary \ref{cor: lk+1 finite} and condition \eqref{eq: lambda k+1 bigger kappa mu}, 
\[\lambda(\mu)=l_1(\mu)\geq \lambda_1>\lambda_{k+1}>\kappa(\mu)\]
and $A$ is quasi-compact with respect to $\mu$. Moreover, recalling Theorem \ref{theo: MET}, we have that $\zeta_1(\mu)=l_1(\mu)=\lambda(\mu)>\kappa(\mu)$.
Hence, $\zeta_1(\mu)$ is an exceptional Lyapunov exponent of the cocycle $A$ with respect to $\mu$ and thus, by \cite[Theorem 2.5]{BD}, it may be approximated by the largest Lyapunov exponent of $A$ with respect to measures concentrated on periodic points. Therefore, since the largest Lyapunov exponent of $A$ with respect to any periodic measure is $\lambda_1$, we get that $\zeta_1(\mu)=\lambda_1$ and, consequently,
\[\lambda_1=l_1(\mu)=\lim_{n\to +\infty}\frac{1}{n}\log V_{1}(A^n(x))\]
for $\mu$-almost every $x\in M$.

We will now proceed recursively. Given $1<q\leq k+1$, suppose we have shown that, for every $1\leq j<q$, $\zeta_j(\mu)=\lambda_j$ and
\[\lambda_1+\lambda_2+\cdots+\lambda_{j}=l_j(\mu)=\lim_{n\to +\infty}\frac{1}{n}\log V_{j}(A^n(x))\]
for $\mu$-almost every $x\in M$. Thus, using Theorem \ref{theo: MET} and Corollary \ref{cor: lk+1 finite}, 
\[\lambda_1+\ldots+\lambda_{q-1}+\zeta_q(\mu)=\zeta_1(\mu)+\ldots+\zeta_{q-1}(\mu)+ \zeta_q(\mu) =l_q(\mu)\geq \lambda_1+\ldots+\lambda_{q}.\]
In particular, by \eqref{eq: lambda k+1 bigger kappa mu}, $\zeta_q(\mu)\geq \lambda_q\geq\lambda_{k+1}>\kappa(\mu)$ and $\zeta_q(\mu)$ is an exceptional Lyapunov exponent of the cocycle $A$ with respect to $\mu$. Thus, by \cite[Theorem 2.5]{BD}, $\zeta_q(\mu)$ may be approximated by the $q$-th largest Lyapunov exponent of $A$ with respect to measures concentrated on periodic points which we know is $\lambda_q$. Consequently, $\zeta_q(\mu)=\lambda_q$ and 
\[\lambda_1+\ldots+\lambda_q=l_q(\mu)=\lim_{n\to +\infty}\frac{1}{n}\log V_{q}(A^n(x))\]
for $\mu$-almost every $x\in M$. This completes the proof of the corollary. 
\end{proof}

\begin{remark}
We observe that, in the previous proof, it was important to ensure that the $(k+1)$-largest Lyapunov exponents of $A$ with respect to any ergodic measure are exceptional in order to be able to apply the results from \cite{BD} since, in general, for Banach cocycles, Lyapunov exponents may not be approximated by Lyapunov exponents on periodic orbits as observed in \cite{KS2}. It is only in this step of the proof that we \emph{explicitly} use condition \eqref{eq: lambda k+1 bigger kappa mu}.
\end{remark}

\begin{lemma}\label{lem: upper bound Vk}
For every $\varepsilon>0$ and $1\leq q\leq k+1$, there exists $C=C_{\varepsilon,q}>0$ such that
\begin{equation}
V_q(A^n(x))\leq C e^{(\lambda_1+\cdots+\lambda_q+\varepsilon)n}
\end{equation}
for every $x\in M$ and $n\in \N$.
\end{lemma}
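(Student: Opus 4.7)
The plan is to establish this uniform upper bound via a semi-uniform subadditive ergodic theorem, leveraging the fact, already obtained in Corollary \ref{cor: lyap exp limit vol growth}, that $l_q(\mu) = \lambda_1 + \cdots + \lambda_q =: \alpha$ for \emph{every} ergodic measure $\mu \in \cM_{erg}(f)$, i.e.\ the Lyapunov-type quantity is constant across $\cM_{erg}(f)$.

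First I would set $a_n(x) := \log V_q(A^n(x))$. The injectivity hypothesis $A(x) \in \cB^i(X)$ ensures $V_q(A^n(x)) > 0$, and the continuity of $V_q$ (Lemma \ref{lemma: subadditive}) makes $a_n$ a real-valued continuous function on the compact space $M$, hence bounded. The submultiplicativity of $V_q$ yields the cocycle inequality $a_{n+m}(x) \le a_n(f^m(x)) + a_m(x)$, so $b_n := \sup_{x \in M} a_n(x)$ is subadditive in $n$ and Fekete's lemma gives the existence of $L := \lim_n b_n/n = \inf_n b_n/n$. Once $L \le \alpha$ is established, the conclusion follows: for $n$ large, $a_n(x) \le (\alpha + \varepsilon) n$, and the finitely many small values of $n$ are absorbed into the constant $C$.

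To prove $L \le \alpha$ I would argue by contradiction: if $L > \alpha + 2\varepsilon_0$ for some $\varepsilon_0 > 0$, then for every $n$ there is $x_n \in M$ with $a_n(x_n) \ge (\alpha + \varepsilon_0) n$. Iterating the subadditivity across the $m$ possible starting offsets $k=0,1,\ldots,m-1$ and averaging produces the standard inequality
\[m \cdot a_n(x) \le \sum_{i=0}^{n-m} a_m(f^i(x)) + 2m D_m, \qquad n \ge 2m,\]
where $D_m$ is a uniform $M$-bound for $|a_0|, \ldots, |a_m|$. Evaluating at $x = x_n$, dividing by $mn$, and passing to a weak-$\ast$ convergent subsequence of the empirical measures $\nu_n := n^{-1}\sum_{i=0}^{n-1}\delta_{f^i(x_n)} \to \nu$ (necessarily $f$-invariant) I would use the continuity of the fixed function $a_m$ to take the limit, obtaining $\alpha + \varepsilon_0 \le \tfrac{1}{m}\int a_m\, d\nu$ for every $m \in \N$.

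Finally I would invoke Kingman's subadditive ergodic theorem for the invariant measure $\nu$: $\beta(\nu) := \lim_m \tfrac{1}{m}\int a_m\, d\nu = \int \tilde\beta(x)\, d\nu(x)$, where $\tilde\beta(x) := \lim_n a_n(x)/n$ exists $\nu$-a.e. By ergodic decomposition $\tilde\beta(x)$ equals $l_q$ of the ergodic component through $x$, and Corollary \ref{cor: lyap exp limit vol growth} forces that value to be $\alpha$ for $\nu$-a.e.\ $x$. Hence $\beta(\nu) = \alpha$, and letting $m \to \infty$ gives the desired contradiction $\alpha + \varepsilon_0 \le \alpha$. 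The main obstacle, and the reason the hypothesis \eqref{eq: lambda k+1 bigger kappa mu} had to be pushed through Corollary \ref{cor: lyap exp limit vol growth}, lies precisely in this last transfer: the equality $l_q(\mu) = \alpha$ must hold on every ergodic component of $\nu$ so that it survives the integration against a possibly non-ergodic weak-$\ast$ limit; everything else is bookkeeping with continuity and subadditivity.
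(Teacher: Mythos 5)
Your proof is correct and follows essentially the same route as the paper: both arguments reduce the uniform bound to the fact (Corollary \ref{cor: lyap exp limit vol growth}) that $\lim_n \tfrac1n\log V_q(A^n(x))=\lambda_1+\cdots+\lambda_q$ almost everywhere for \emph{every} ergodic measure, and then upgrade this to a uniform estimate via a semi-uniform subadditive ergodic theorem. The only difference is that the paper invokes this last step as a black box (Proposition \ref{prop: BS13}, quoted from \cite{KS13}), whereas you re-derive it inline by the standard empirical-measure/weak-$\ast$ limit argument; both are valid.
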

For the proof we need the following result.
\begin{proposition}[Proposition 4.9 of \cite{KS13}]\label{prop: BS13}
Let $f\colon M\to M$ be a homeomorphism of a compact metric space $(M,d)$ and
$a_n \colon M\to \R$, $n\in \N$ be a subadditive sequence of continuous functions. Given $\mu\in \cM_{erg}(f)$, consider
\begin{equation*}\label{eq: aux BS}
\chi(\mu)=\lim_{n\to +\infty} \frac{a_n(x)}{n} \;\text{ for  $\mu$-a.e. $x\in M.$}
\end{equation*}
If $\chi(\mu)<0$ for every $\mu\in\cM_{erg}(f)$, then there exists $N\in \N$ such that $a_N (x) < 0$ for all $x\in M$.
\end{proposition}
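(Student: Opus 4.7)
The plan is to combine a weak-$*$ upper semicontinuity argument on the compact space $\cM(f)$ with a classical subadditive averaging trick that transfers uniform negativity on the space of measures to a uniform pointwise estimate.

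First, I would define $\Phi\colon \cM(f)\to [-\infty,+\infty)$ by $\Phi(\mu):=\inf_n \frac{1}{n}\int a_n\, d\mu$; since $n\mapsto \int a_n\, d\mu$ is subadditive this infimum coincides with $\lim_n \frac{1}{n}\int a_n\, d\mu$, and for ergodic $\mu$ Kingman's theorem identifies it with $\chi(\mu)$. For a general $\mu\in\cM(f)$, the ergodic decomposition $\mu=\int \mu_y\, d\mu(y)$ together with Fatou's lemma (using the a priori upper bound $a_n/n\le \sup_M a_1$ coming from subadditivity and continuity of $a_1$) gives $\Phi(\mu)=\int \chi(\mu_y)\, d\mu(y)<0$. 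Because each functional $\mu\mapsto \frac{1}{n}\int a_n\, d\mu$ is weak-$*$ continuous on $\cM(f)$, $\Phi$ is an infimum of continuous functions and hence upper semicontinuous; since $\cM(f)$ is weak-$*$ compact and nonempty, $\Phi$ attains its supremum, so there exists $c>0$ with $\Phi(\mu)\le -c$ for every $\mu\in\cM(f)$.

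Second, I would argue by contradiction. If no such $N$ existed, for each $N\in\N$ one could pick $x_N\in M$ with $a_N(x_N)\ge 0$; passing to a subsequence, the empirical measures $\nu_N:=\frac{1}{N}\sum_{k=0}^{N-1}\delta_{f^k(x_N)}$ converge weakly-$*$ to some $\mu\in\cM(f)$ (invariance being the usual Krylov--Bogolyubov computation). The crucial ingredient is an averaging inequality: fix $m\in\N$ and, for each offset $k\in\{0,\ldots,m-1\}$, write $N=k+q_k m+r_k$ with $0\le r_k<m$ and apply subadditivity to obtain
\[
a_N(x) \le a_k(x) + \sum_{j=0}^{q_k-1} a_m(f^{k+jm}(x)) + a_{r_k}(f^{k+q_k m}(x)).
\]
Summing over $k=0,\ldots,m-1$ and noting that each index $i\in\{0,\ldots,N-m\}$ appears in exactly one middle sum yields
\[
m\cdot a_N(x) \le \sum_{i=0}^{N-m} a_m(f^i(x)) + m\, C_m,
\]
where $C_m:=2\max_{0\le j\le m}\sup_M |a_j|<\infty$ by continuity. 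Dividing by $mN$, evaluating at $x_N$ and letting $N\to\infty$ along the chosen subsequence, the right-hand side tends to $\frac{1}{m}\int a_m\, d\mu$ by weak-$*$ convergence and continuity of $a_m$, while the left-hand side is non-negative by choice of $x_N$. Hence $\frac{1}{m}\int a_m\, d\mu \ge 0$ for every $m\ge 1$, so $\Phi(\mu)\ge 0$, contradicting $\Phi(\mu)\le -c<0$.

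The main obstacle is the averaging step: one must verify that summing the subadditive decompositions over the $m$ starting offsets produces each orbit-index with multiplicity exactly one, up to an $O(m)$ boundary contribution that is absorbed into $C_m/N\to 0$, so that dividing by $m$ genuinely recovers a Birkhoff-type average for $a_m$. A secondary care-point is the ergodic-decomposition step in the first paragraph: since $\chi(\mu_y)$ may equal $-\infty$, naive dominated convergence is unavailable, but the one-sided integrable majorant $a_n/n\le \sup_M a_1$ makes Fatou applicable in the direction we need, and the strict inequality $\Phi(\mu)<0$ is preserved as a non-strict bound $\Phi(\mu)\le \int \chi(\mu_y)d\mu(y)<0$.
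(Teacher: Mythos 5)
Your proof is correct. Note that the paper does not prove this statement at all: it is quoted verbatim as Proposition 4.9 of the cited reference [KS13] (Kalinin--Sadovskaya), so there is no in-paper argument to compare against. Your argument is essentially the standard proof of this uniform subadditive estimate (as in [KS13], and in the Schreiber/Sturman--Stark/Morris line of results the paper alludes to via Proposition \ref{prop: unif conv}): identify $\chi(\mu)$ with $\inf_n\frac1n\int a_n\,d\mu$ via Kingman/Fekete, extend negativity to non-ergodic measures by ergodic decomposition and reverse Fatou (using the one-sided majorant $a_n/n\le\sup_M a_1$), and run the contradiction through empirical measures of points with $a_N(x_N)\ge 0$ together with the offset-averaging inequality. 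Two cosmetic points: the compactness/upper-semicontinuity step is not actually needed, since the contradiction $\frac1m\int a_m\,d\mu\ge 0$ for all $m$ already clashes with the pointwise bound $\Phi(\mu)<0$ at the limit measure; and in the averaging inequality the middle sums may miss up to one index per residue class near $N-m$ (and the $k=0$ term requires the convention $a_0\equiv 0$), so the constant $C_m$ should absorb an extra $\max_{0\le j\le m}\sup_M|a_j|$ — this is exactly the $O(m)$ boundary contribution you flagged and does not affect the limit.
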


\begin{proof}[Proof of Lemma \ref{lem: upper bound Vk}]
Given $n\in \N$, let us consider
\[a_n(x)=\log V_q(A^n(x))-(\lambda_1+\cdots+\lambda_q+\varepsilon)n.\]
Then, since $A$ is a continuous map, it follows by Lemma \ref{lemma: subadditive} that $(a_n(x))_{n\in \N}$ is a subadditive sequence of continuous functions. Moreover, by Corollary \ref{cor: lyap exp limit vol growth},
\[
\begin{split}
\lim_{n\to +\infty} \frac{a_n(x)}{n}&=\lim_{n\to +\infty} \frac{V_q(A^n(x))-(\lambda_1+\cdots+\lambda_q+\varepsilon)n}{n}\\
&=\lambda_1+\cdots+\lambda_q-(\lambda_1+\cdots+\lambda_q+\varepsilon)=-\varepsilon<0
\end{split}
 \]
for $\mu$-almost every $x\in M$ and every $\mu\in \cM_{erg}(f)$. Thus, by Proposition \ref{prop: BS13}, it follows that there exists $N\in \N$ such that $a_N(x)<0$ for every $x\in M$. In particular, 
\[V_q(A^N(x))\leq e^{(\lambda_1+\cdots+\lambda_q+\varepsilon)N}\]
for every $x\in M$. Thus, using again Lemma \ref{lemma: subadditive} and taking
\[C= \max_{1\leq j\leq N} \left\{ \max_{x\in M}\{V_q(A^{j}(x))e^{-(\lambda_1+\cdots+\lambda_q+\varepsilon)j}\},1\right\}, \]
it follows that
\[V_q(A^n(x))\leq Ce^{(\lambda_1+\cdots+\lambda_q+\varepsilon)n}\]
for every $x\in M$ and $n\in \N$ as claimed.
\end{proof}

\begin{lemma}\label{lem: lower bound Vk per}
For every $1\leq q\leq k+1$, there exists $C=C_{q}>0$ such that
\begin{equation}
V_q(A^n(p))\geq C^{-1} e^{(\lambda_1+\cdots+\lambda_q)n}
\end{equation}
for every $p\in \Fix(f^n)$ and $n\in \N$.
\end{lemma}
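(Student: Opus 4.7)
The plan is to extract the desired lower bound directly from the asymptotic identity provided by Corollary \ref{cor: lyap exp limit vol growth}, combined with the submultiplicativity of $V_q$ (Lemma \ref{lemma: subadditive}). In fact, this argument yields the stronger statement with constant $C=1$; no compactness or spectral estimates need be invoked beyond what has already been established.

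Fix $n\in \N$ and $p\in \Fix(f^n)$, and consider the ergodic periodic measure $\mu_p = \frac{1}{n}\sum_{i=0}^{n-1}\delta_{f^i(p)}$. Since $\mu_p$ assigns positive mass to the point $p$ itself, $p$ lies in the full-measure set on which the limit in Corollary \ref{cor: lyap exp limit vol growth} is attained; therefore
\[
\lim_{m\to \infty}\frac{1}{m}\log V_q(A^m(p)) = \lambda_1+\cdots+\lambda_q.
\]
On the other hand, using that $A^{kn}(p)=(A^n(p))^k$ together with the submultiplicativity of $V_q$, one gets $V_q(A^{kn}(p))\leq V_q(A^n(p))^k$ for every $k\geq 1$, and so
\[
\frac{1}{kn}\log V_q(A^{kn}(p)) \leq \frac{1}{n}\log V_q(A^n(p)).
\]
Letting $k\to\infty$ and invoking the asymptotic identity above yields $\lambda_1+\cdots+\lambda_q\leq \frac{1}{n}\log V_q(A^n(p))$, i.e.\ $V_q(A^n(p))\geq e^{(\lambda_1+\cdots+\lambda_q)n}$, which is the claimed bound with $C=1$.

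The proof is essentially a two-line application of tools already at hand, so there is no serious obstacle. The heavy lifting was done in Corollary \ref{cor: lyap exp limit vol growth}: the fact that the Lyapunov spectrum of $(A,f,\mu_p)$ has its top $k+1$ exceptional exponents equal to $\lambda_1,\ldots,\lambda_{k+1}$ (rather than merely being bounded above by them, as Lemma \ref{lem: upper bound Vk} would give) is what converts the one-sided submultiplicative bound into an equality in the limit. The only point to check is that Corollary \ref{cor: lyap exp limit vol growth} can legitimately be evaluated at $p$ itself, which is immediate because $\mu_p$ is purely atomic with an atom at $p$.
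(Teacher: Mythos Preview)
Your argument is correct, and it takes a genuinely different route from the paper's proof.

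The paper argues spectrally: for each $1\le j\le q$ it considers the sum of eigenspaces of $A^n(p)$ associated to the top $j$ eigenvalues $\gamma_1(p),\ldots,\gamma_j(p)$, observes that this has dimension at least $j$, and hence every subspace of codimension $j-1$ meets it; this forces $c_j(A^n(p))\ge |\gamma_j(p)|=e^{\lambda_j n}$, and Lemma~\ref{lemma: relation volume growth} then converts the product bound into the desired estimate on $V_q$. Your approach instead combines the already-established identity $l_q(\mu_p)=\lambda_1+\cdots+\lambda_q$ with submultiplicativity of $V_q$, which is slicker and even yields the sharp constant $C=1$.

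The trade-off is that the paper's proof is self-contained at this point in the argument---it uses only the constant-periodic-data hypothesis and elementary linear algebra---whereas yours leans on Corollary~\ref{cor: lyap exp limit vol growth}, which in turn invokes the periodic-approximation result of \cite{BD}. That said, for \emph{periodic} measures the identity you need was already obtained inside the proof of Corollary~\ref{cor: lk+1 finite} using only Theorem~\ref{theo: MET}; so you could cite that instead and avoid the dependence on \cite{BD} altogether.
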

\begin{proof}
By Lemma \ref{lemma: relation volume growth}, it is enough to show that $c_j(A^n(p))\geq e^{\lambda_j n}$ for every $1\leq j\leq q$. Given $1\leq j\leq q$, let us consider
\[\mathcal{N}_j=\{v\in X: \; A^n(p)v=\gamma_i(p)v \text{ for some } 1\leq i\leq j  \}.\]
Then, $\dim(\mathcal{N}_j)\geq j$. In particular, any subspace $V$ of codimension $j-1$ must intersect $\cN_j$ and thus $\|A^n(p)_{\mid{V}}\|\geq |\gamma_j(p)|$. Consequently
\[c_j(A^n(p))\geq |\gamma_j(p)|=e^{\lambda_jn}\]
as claimed.
\end{proof}

\begin{lemma}\label{lem: lower bound Vk per gamma}
For every $\varepsilon,\gamma>0$ sufficiently small, $1\leq q\leq k+1$
and some fixed $l\in \N$, there exists $C=C_{\varepsilon,q,l}>0$ such that for every $n\in \N$, $p\in \Fix(f^n)$ and $\gamma n-1\leq i\leq \gamma n+l$ we have that
\begin{equation*}
V_q(A^i(p))\geq C^{-1} e^{(\lambda_1+\cdots+\lambda_q-\varepsilon)\gamma n}.
\end{equation*}
\end{lemma}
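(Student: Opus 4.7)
The plan is to combine the lower bound from Lemma \ref{lem: lower bound Vk per} (applied at the full period $n$) with the upper bound from Lemma \ref{lem: upper bound Vk} (applied along the ``tail'' of the orbit) via the submultiplicativity of $V_q$. Concretely, since $A^n(p) = A^{n-i}(f^i(p)) \circ A^i(p)$, Lemma \ref{lemma: subadditive} yields
\[
V_q(A^i(p)) \;\geq\; \frac{V_q(A^n(p))}{V_q(A^{n-i}(f^i(p)))}.
\]
I will fix an auxiliary $\varepsilon' > 0$ (to be adjusted later), bound the numerator from below by $C_q^{-1} e^{(\lambda_1 + \cdots + \lambda_q) n}$ and the denominator from above by $C_{\varepsilon',q} \, e^{(\lambda_1 + \cdots + \lambda_q + \varepsilon')(n-i)}$. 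This requires $n \geq i$, which holds for large $n$ once $\gamma < 1$ is fixed and $i \leq \gamma n + l$; the finitely many small $n$ for which the range of $i$ is degenerate are absorbed into the constant.

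Rearranging exponents gives $V_q(A^i(p)) \geq C' \, e^{(\lambda_1 + \cdots + \lambda_q)\, i \,-\, \varepsilon'(n - i)}$. On the prescribed range $\gamma n - 1 \leq i \leq \gamma n + l$ we have $n - i \leq (1-\gamma)n + 1$ and $|i - \gamma n| \leq l+1$, so selecting
\[
\varepsilon' \,:=\, \frac{\varepsilon\, \gamma}{1 - \gamma}
\]
converts $\varepsilon'(n-i)$ into $\varepsilon \gamma n + O(1)$, while substituting $i = \gamma n + O(1)$ in the leading term replaces $(\lambda_1 + \cdots + \lambda_q)\, i$ by $(\lambda_1 + \cdots + \lambda_q)\,\gamma n + O(1)$. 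Both $O(1)$ error terms depend only on $\varepsilon, q, l$ (and the fixed numbers $\lambda_j$) and can be absorbed into a single multiplicative constant, producing the desired bound $V_q(A^i(p)) \geq C^{-1} e^{(\lambda_1 + \cdots + \lambda_q - \varepsilon)\,\gamma n}$.

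I do not expect a conceptual obstacle here, only careful bookkeeping. The main subtlety is that the error $\varepsilon'(n - i)$ picked up from Lemma \ref{lem: upper bound Vk} is measured on the long scale $n - i \asymp (1-\gamma)n$, whereas the target accuracy $\varepsilon \gamma n$ lives on the short scale $i \asymp \gamma n$; this forces the rescaling $\varepsilon' \sim \varepsilon \gamma/(1-\gamma)$ and is the reason the hypothesis ``$\gamma$ sufficiently small'' (in particular $\gamma < 1$) enters, with the constant $C$ implicitly depending on $\gamma$ as well as on $\varepsilon, q, l$.
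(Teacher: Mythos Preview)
Your proposal is correct and follows essentially the same route as the paper: both use the submultiplicativity $V_q(A^n(p))\le V_q(A^{n-i}(f^i(p)))V_q(A^i(p))$, bound the full-period term below via Lemma~\ref{lem: lower bound Vk per}, bound the tail above via Lemma~\ref{lem: upper bound Vk}, and then rescale the auxiliary $\varepsilon'$ to $\varepsilon\gamma/(1-\gamma)$ to pass from scale $(1-\gamma)n$ to scale $\gamma n$. The only cosmetic difference is that the paper handles the replacement of $i$ by $\gamma n$ by splitting on the sign of $\lambda_1+\cdots+\lambda_q$, whereas you absorb the discrepancy $|i-\gamma n|\le l+1$ directly into an $O(1)$ term; both are equally valid bookkeeping choices.
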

\begin{proof} 
Let $n\in \N$ be large enough so that $\gamma n +l\leq n$ and $\gamma n >1$ and take $i\in \N$ such that $\gamma n-1\leq i\leq \gamma n+l$. Then, by Lemmas  \ref{lemma: subadditive}, \ref{lem: upper bound Vk} and \ref{lem: lower bound Vk per} there exists $C>0$ such that
\[\begin{split}
C^{-1} e^{(\lambda_1+\cdots+\lambda_q)n}&\leq V_q(A^n(p))\leq V_q(A^{n-i}(f^i(p)))V_q(A^i(p))\\
&\leq C e^{(\lambda_1+\cdots+\lambda_q+\varepsilon)(n-i)}V_q(A^i(p))
\end{split}
\]
and
\[\begin{split}
V_q(A^i(p))&\geq C^{-2} e^{(\lambda_1+\cdots+\lambda_q+\varepsilon) i}e^{-\varepsilon n} .
\end{split}
\]
Consequently,
\[\begin{split}
V_q(A^i(p))&\geq C^{-2} e^{(\lambda_1+\cdots+\lambda_q+\varepsilon) (\gamma n-1)}e^{-\varepsilon n} 
\end{split}
\]
if $\lambda_1+\cdots+\lambda_q\geq 0$ or 
\[\begin{split}
V_q(A^i(p))&\geq C^{-2} e^{(\lambda_1+\cdots+\lambda_q+\varepsilon) (\gamma n+l)}e^{-\varepsilon n} 
\end{split}
\]
if $\lambda_1+\cdots+\lambda_q<0$. In any case, there exists $C>0$ (maybe depending also on $l$) such that
\[\begin{split}
V_q(A^i(p))&\geq C^{-2} e^{(\lambda_1+\cdots+\lambda_q) \gamma n}e^{-(1-\gamma)\varepsilon n} .
\end{split}
\]
Finally, changing $\varepsilon$ by $\gamma \varepsilon/(1-\gamma)$ we get that there exists $ C>0$ such that
\[V_q(A^i(p))\geq  C^{-1} e^{(\lambda_1+\cdots+\lambda_q-\varepsilon)\gamma n}\]
as claimed.  
\end{proof}

\begin{corollary}\label{cor: upper and lower bound ck per gamma}
For every $\varepsilon,\gamma>0$ sufficiently small, $1\leq q\leq k+1$
and some fixed $l\in \N$, there exists $C=C_{\varepsilon,q,l}>0$ such that for every $n\in \N$, $p\in \Fix(f^n)$ and $\gamma n-1\leq i\leq \gamma n+l$, we have that
\begin{equation*}
 C^{-1} e^{(\lambda_q-\varepsilon)\gamma n}\leq c_q(A^i(p))\leq C e^{(\lambda_q+\varepsilon)\gamma n}.
\end{equation*}
\end{corollary}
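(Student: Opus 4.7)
\textbf{Proof proposal for Corollary \ref{cor: upper and lower bound ck per gamma}.} The plan is to reduce the Gelfand number $c_q(A^i(p))$ to the ratio $V_q(A^i(p))/V_{q-1}(A^i(p))$ by means of Lemma \ref{lemma: relation volume growth}, and then invoke the already-established upper bound from Lemma \ref{lem: upper bound Vk} and lower bound from Lemma \ref{lem: lower bound Vk per gamma} for the $V_q$'s. For $q=1$ we have $V_1(T)=c_1(T)=\|T\|$, so the result is immediate from Lemmas \ref{lem: upper bound Vk} and \ref{lem: lower bound Vk per gamma} applied with $q=1$ and a bounded $i/\gamma n$ correction. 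For $q\geq 2$, Lemma \ref{lemma: relation volume growth} gives constants (depending only on $q$) such that
\[
C^{-1}\,\frac{V_q(A^i(p))}{V_{q-1}(A^i(p))}\;\leq\;c_q(A^i(p))\;\leq\;C\,\frac{V_q(A^i(p))}{V_{q-1}(A^i(p))},
\]
where I have used $\prod_{j=1}^q c_j(T) \asymp V_q(T)$ and $\prod_{j=1}^{q-1} c_j(T)\asymp V_{q-1}(T)$.

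For the upper bound on $c_q(A^i(p))$, I would apply Lemma \ref{lem: upper bound Vk} to $V_q$ and Lemma \ref{lem: lower bound Vk per gamma} to $V_{q-1}$:
\[
c_q(A^i(p))\;\leq\;C\,\frac{e^{(\lambda_1+\cdots+\lambda_q+\varepsilon)\, i}}{e^{(\lambda_1+\cdots+\lambda_{q-1}-\varepsilon)\,\gamma n}}\;=\;C\,e^{\lambda_q i \,+\,(\lambda_1+\cdots+\lambda_{q-1})(i-\gamma n)\,+\,\varepsilon (i+\gamma n)}.
\]
Since $i-\gamma n\in[-1,l]$, the factor $(\lambda_1+\cdots+\lambda_{q-1})(i-\gamma n)$ is bounded by a constant depending only on the fixed data $(\lambda_1,\dots,\lambda_{q-1},l)$ and can be absorbed into $C$. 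Similarly $\lambda_q i=\lambda_q\gamma n+O_l(1)$ and $\varepsilon(i+\gamma n)\leq 2\varepsilon\gamma n+\varepsilon l$, so after enlarging $C$ and relabeling $\varepsilon$ (e.g.\ starting with $\varepsilon/2$), we obtain $c_q(A^i(p))\leq C e^{(\lambda_q+\varepsilon)\gamma n}$.

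For the lower bound the roles are swapped: apply Lemma \ref{lem: lower bound Vk per gamma} to $V_q$ and Lemma \ref{lem: upper bound Vk} to $V_{q-1}$ to obtain
\[
c_q(A^i(p))\;\geq\;C^{-1}\,\frac{e^{(\lambda_1+\cdots+\lambda_q-\varepsilon)\,\gamma n}}{e^{(\lambda_1+\cdots+\lambda_{q-1}+\varepsilon)\, i}}\;=\;C^{-1}\,e^{\lambda_q\gamma n \,+\,(\lambda_1+\cdots+\lambda_{q-1})(\gamma n-i)\,-\,\varepsilon(\gamma n+ i)},
\]
and the same boundedness of $\gamma n-i$ together with a final relabeling of $\varepsilon$ yields $c_q(A^i(p))\geq C^{-1}e^{(\lambda_q-\varepsilon)\gamma n}$.

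There is no real obstacle beyond bookkeeping: the whole proof is a combination of Lemma \ref{lemma: relation volume growth} with the already proved bounds on $V_q$, plus the observation that $i-\gamma n$ stays in the bounded interval $[-1,l]$ so that the otherwise dangerous cross-terms $(\lambda_1+\cdots+\lambda_{q-1})(i-\gamma n)$ are absorbed into the multiplicative constant. The only mildly subtle point is to make sure that the applied versions of Lemmas \ref{lem: upper bound Vk} and \ref{lem: lower bound Vk per gamma} use the \emph{same} $\varepsilon$ and $l$ so that a single constant $C=C_{\varepsilon,q,l}$ suffices in the end.
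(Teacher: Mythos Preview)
Your proposal is correct and follows essentially the same route as the paper: reduce $c_q$ to the ratio $V_q/V_{q-1}$ via Lemma~\ref{lemma: relation volume growth}, then feed in the upper bound of Lemma~\ref{lem: upper bound Vk} and the lower bound of Lemma~\ref{lem: lower bound Vk per gamma}. The paper's own proof is terser---it records the two-sided bounds on $V_q$ and $V_{q-1}$ directly with exponent $\gamma n$ (silently absorbing the $i$ versus $\gamma n$ discrepancy into $C$), whereas you spell out the $O_l(1)$ cross-term $(\lambda_1+\cdots+\lambda_{q-1})(i-\gamma n)$ and the $\varepsilon$-relabeling explicitly---but the content is the same.
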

\begin{proof}
From Lemmas \ref{lem: upper bound Vk} and \ref{lem: lower bound Vk per gamma} it follows that there exists a constant $C>0$ such that
\[C^{-1} e^{(\lambda_1+\cdots+\lambda_q-\varepsilon)\gamma n}\leq V_q(A^i(p))\leq C e^{(\lambda_1+\cdots+\lambda_q+\varepsilon)\gamma n}\]
and 
\[C^{-1} e^{(\lambda_1+\cdots+\lambda_{q-1}-\varepsilon)\gamma n}\leq V_{q-1}(A^i(p))\leq C e^{(\lambda_1+\cdots+\lambda_{q-1}+\varepsilon)\gamma n}.\]
On the other hand, from Lemma \ref{lemma: relation volume growth} we have that there exists $C>0$ such that
\[C^{-1}c_q(A^i(p))\leq \frac{V_{q}(A^i(p))}{V_{q-1}(A^i(p))}\leq C c_q(A^i(p)). \]
Combining these observations we get the desired conclusion.
\end{proof}

\begin{lemma}\label{lem: upper lower bound ck general}
For every $\varepsilon>0$ sufficiently small and $1\leq q\leq k+1$, there exists $C=C_{\varepsilon}>0$ such that for every $n\in \N$ and $x\in M$, we have that
\begin{equation*}
 C^{-1} e^{(\lambda_q-\varepsilon)n}\leq c_q(A^n(x))\leq C e^{(\lambda_q+\varepsilon)n}.
\end{equation*}
\end{lemma}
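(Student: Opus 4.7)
The plan is to use the closing property to reduce estimates at arbitrary $x\in M$ to the periodic case, where Corollary \ref{cor: upper and lower bound ck per gamma} already provides the desired bounds. Given $x\in M$ and $n\in\N$, the closing property furnishes a periodic point $p$ of period $N=n+j$ with $0\leq j\leq l$ and $d(f^i(x),f^i(p))\leq Ce^{-\theta\min\{i,n-i\}}$ for $0\leq i\leq n$. Applying Corollary \ref{cor: upper and lower bound ck per gamma} to $p$ with $\gamma=n/N$ close to $1$, and absorbing the bounded correction coming from $j\leq l$ into the constants, yields
\[
C^{-1}e^{(\lambda_q-\varepsilon)n}\leq c_q(A^n(p))\leq Ce^{(\lambda_q+\varepsilon)n}.
\]

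For the upper bound on $c_q(A^n(x))$, I would argue by induction on $q$: the uniform estimate $V_q(A^n(x))\leq Ce^{(\lambda_1+\cdots+\lambda_q+\varepsilon)n}$ from Lemma \ref{lem: upper bound Vk}, combined with the inductive lower bound on $V_{q-1}(A^n(x))\asymp \prod_{j<q}c_j(A^n(x))$ and the equivalence $c_q\asymp V_q/V_{q-1}$ from Lemma \ref{lemma: relation volume growth}, delivers the upper bound at stage $q$.

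For the lower bound on $c_q(A^n(x))$, the natural move is to combine Lemma \ref{lem: gelfand Lipschitz} with a telescoping estimate $\|A^n(x)-A^n(p)\|\leq \sum_i \|A^{n-1-i}(f^{i+1}(x))\|\cdot\|A(f^i(x))-A(f^i(p))\|\cdot\|A^i(p)\|$. Using the $\alpha$-Hölder continuity of $A$, the exponential shadowing estimate, and the uniform norm bound $\|A^m(y)\|\leq Ce^{(\lambda_1+\varepsilon)m}$ from Lemma \ref{lem: upper bound Vk} with $q=1$, this produces $\|A^n(x)-A^n(p)\|\leq Ce^{(\lambda_1+\varepsilon)n}$. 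The hard part will be that this bound is too weak for $q\geq 2$, since $e^{(\lambda_1+\varepsilon)n}$ dominates $c_q(A^n(p))\asymp e^{\lambda_q n}$, so the naive Lipschitz comparison is insufficient.

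To circumvent this obstacle, I would instead bound $V_q(A^n(x))$ from below directly by working on the $q$-dimensional spectral subspace $E_p\subset X$ associated with the top $q$ exceptional eigenvalues of $A^n(p)$: on $E_p$ the restriction $A^n(p)|_{E_p}$ is well-conditioned, with all singular values of order $e^{\lambda_j n}$ and determinant of modulus $\asymp e^{(\lambda_1+\cdots+\lambda_q)n}$. A cone-field / graph-transform style argument, adapted to the non-invertible Banach setting and exploiting both the $A^n(p)$-invariance of $E_p$ and the sharper shadowing in the middle of the orbit segment, should show that $A^n(x)|_{E_p}$ remains close enough to $A^n(p)|_{E_p}$ in a multiplicative sense to conclude $|\det(A^n(x)|_{E_p})|\geq C^{-1}e^{(\lambda_1+\cdots+\lambda_q-\varepsilon)n}$. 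Combined with the already-available upper bound on $V_{q-1}(A^n(x))$ and Lemma \ref{lemma: relation volume growth}, this yields the required $c_q(A^n(x))\geq C^{-1}e^{(\lambda_q-\varepsilon)n}$.
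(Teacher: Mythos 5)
You correctly isolate the crux: the telescoping bound $\|A^n(x)-A^n(p)\|\lesssim e^{(\lambda_1+\varepsilon)n}$ is useless for $q\geq 2$, and the upper bound via $c_q\asymp V_q/V_{q-1}$ together with Lemma \ref{lem: upper bound Vk} is fine once lower bounds at stage $q-1$ are in hand. But your proposed fix for the lower bound is a sketch of a cone-field/graph-transform argument on the spectral subspace $E_p$ that is not carried out, and it faces two concrete obstructions. First, the claim that $A^n(p)|_{E_p}$ is ``well-conditioned, with all singular values of order $e^{\lambda_j n}$'' at intermediate times is not available: $E_p$ is only spectrally invariant for the full-period map, and uniform (in $p$ and $n$) control of the singular values of the restriction along the orbit is essentially equivalent to the statement being proved. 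Second, and more fundamentally, the perturbations $A(f^i(x))-A(f^i(p))$ are of order $1$ near an endpoint of the shadowed segment (the closing estimate only gives $e^{-\theta\min\{i,n-i\}}$), so any multiplicative comparison of $A^n(x)|_{E_p}$ with $A^n(p)|_{E_p}$ over the \emph{full} time $n$ loses control exactly where the shadowing degenerates; invoking ``sharper shadowing in the middle'' does not repair the ends.

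The paper's mechanism is different and you should note why the auxiliary results are shaped the way they are. One recenters the orbit (applying the closing property to $f^{-n}(x)$ over time $2n$, so the error at time $i\in[0,n]$ is $\leq Ce^{-\theta(n-i)}$) and compares the cocycles only over an initial window of length $m=\lfloor\gamma n\rfloor$ with $\gamma$ \emph{small}: there the accumulated H\"older error is bounded by $e^{\eta\gamma n}e^{-\alpha\theta(1-\gamma)n}$ times combinatorial factors, giving $\|A^m(x)-A^m(p)\|\leq Ce^{-\varepsilon_0 n}$ in operator norm. Lemma \ref{lem: gelfand Lipschitz} then transfers this to $c_q$, and Corollary \ref{cor: upper and lower bound ck per gamma} --- which is stated precisely for times $i\approx\gamma\times(\text{period})$ with $\gamma$ small, not $\gamma$ close to $1$ as you use it --- supplies the two-sided bound on $c_q(A^m(p))$ at that short time. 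Your first paragraph applies that corollary with $\gamma=n/N$ near $1$, which collapses it back to Lemmas \ref{lem: upper bound Vk} and \ref{lem: lower bound Vk per} and misses the role it plays. As written, the lower bound on $c_q(A^n(x))$ for general $x$ and $q\geq 2$ is not established by your argument.
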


\begin{proof}
Take $x\in M$ and $n\in \N$. By the closing property, there exists $c,\theta, l>0$ (independent of $x$ and $n$), $0\leq j \leq l$ and $p\in \Fix(f^{2n+j})$ such that
\[d(f^i(f^{-n}(x)),f^i(f^{-n}(p)))\leq c e^{-\theta \min\lbrace i, 2n-i\rbrace}d(f^{-n}(x),f^{n}(x)),
\]
for every $i=0,1,\ldots ,2n$. In particular,
\begin{equation}\label{eq: closing aux}
d(f^i(x),f^i(p))\leq C e^{-\theta (n-i)}
\end{equation}
for every $i=0,1,\ldots ,n$ and some $C>0$ independent of $x$ and $n$.

Our objective now is to estimate the distance between $A^m(p)$ and $A^m(x)$ for some special $m\in \N$. For this purpose, we follow very closely the idea developed in the proof of \cite[Proposition 3.4]{DG}.
Let us fix constants $ \varepsilon_0,\kappa,\eta>0$ and $\gamma \in (0,1/(l+1))$ such that 
\begin{equation}\label{eq: bound A}
\|A(x)\|\leq e^\eta \text{ for every $x\in M$}
\end{equation}
 and 
\[ \eta \gamma -\alpha \theta (1-\gamma)+\gamma\kappa <-\varepsilon_0. \]
Note that the last inequality is equivalent to
\begin{equation}\label{eq: rel gamma epsilon0}
0<\gamma <\frac{-\varepsilon_0+\alpha \theta}{\eta+\alpha\theta+\kappa}. 
\end{equation}
In particular, taking $\varepsilon_0>0$ small enough, we can take $\gamma\in (0,1/(l+1))$ such that the previous inequality holds.

Given $0\leq i\leq n$, let us consider $E_{i,n}:=A(f^i(x))-A(f^i(p))$. Since $A$ is $\alpha$-H\"older continuous, using \eqref{eq: closing aux} we get that
\begin{equation}\label{eq: est Ein}
\begin{split}
\|E_{i,n}\|& \leq Ce^{-\theta  \alpha ( n-i)}
\end{split}
\end{equation}
for some $C>0$ and every $i=0,1,\ldots,n$. Then, we can write
\[\begin{split}
A^{\lfloor \gamma n \rfloor}(x)&=\prod_{i=0}^{\lfloor \gamma n \rfloor-1}A(f^i(x))=\prod_{i=0}^{\lfloor \gamma n \rfloor-1}(A(f^i(p))+E_{i,n}).
\end{split}\]
Expanding the right hand side of this expression as a product and grouping together the terms involving exactly $t$ copies of the error terms $E_{i,n}$ with $0\leq i\leq \lfloor \gamma n \rfloor$, and denoting it by $B_{t,n}$, we get that
\[\begin{split}
A^{\lfloor \gamma n \rfloor}(x)&=A^{\lfloor \gamma n \rfloor}(p) +\sum_{t=1}^{\lfloor \gamma n \rfloor}B_{t,n}.
\end{split}\] 
Now, since each $B_{t,n}$ is formed by $\binom{\lfloor \gamma n \rfloor}{t}$ summands each of them involving exactly $t$ copies of the error terms $E_{i,n}$ with $0\leq i\leq \lfloor \gamma n \rfloor$  and $\lfloor \gamma n \rfloor-t$ terms of the form $A(f^i(p))$, it follows by \eqref{eq: bound A} and \eqref{eq: est Ein} that 
\[\|B_{t,n}\|\leq C \binom{\lfloor \gamma n \rfloor}{t}e^{-t\theta  \alpha (1-\gamma) n}e^{\eta (\gamma n-t)}\]
for some constant $C>0$. By \cite[Claim 3.5]{DG}, there exists $C_\kappa>0$ such that $ \binom{\lfloor \gamma n \rfloor}{t} \leq C_\kappa e^{\lfloor \gamma n \rfloor t \kappa}$. Thus, combining these observations we get that
\[\|B_{t,n}\|\leq C e^{-t\theta  \alpha (1-\gamma) n}e^{\eta (\gamma n-t)}e^{ \gamma n t \kappa}\]
for some constant $C>0$. Consequently,
\[
\begin{split}
\left\|\sum_{t=1}^{\lfloor \gamma n \rfloor}B_{t,n}\right\|&\leq \sum_{t=1}^{\lfloor \gamma n \rfloor} C e^{-t\theta  \alpha (1-\gamma) n}e^{\eta (\gamma n-t)}e^{ \gamma n t \kappa}\\
&=Ce^{\eta \gamma n}\sum_{t=1}^{\lfloor \gamma n \rfloor} e^{-t(\theta  \alpha (1-\gamma) n+\eta - \gamma n \kappa)}\\
&\leq Ce^{\eta \gamma n} \frac{e^{-(\theta  \alpha (1-\gamma) n+\eta - \gamma n \kappa)}}{1-e^{-(\theta  \alpha (1-\gamma) n+\eta - \gamma n \kappa)}}\\
&\leq \tilde C e^{\left(\eta\gamma -\theta  \alpha (1-\gamma) +\gamma \kappa\right)n}
\end{split}
\]
for some constant $C>0$.
Therefore, since $\eta \gamma -\alpha \theta (1-\gamma)+\gamma\kappa <-\varepsilon_0$, combining the previous observations we get that
\[\|A^{\lfloor \gamma n \rfloor}(x)-A^{\lfloor \gamma n \rfloor}(p)\|\leq Ce^{-\varepsilon_0 n}\]
where the constant $C$ does not depend on $x$ nor $n$.

Now, given $m\in \N$ take $n=\lceil \gamma^{-1}m\rceil$. In particular, $m=\lfloor \gamma n \rfloor$ and from the previous inequality it follows that
\[\|A^{m}(x)-A^{m}(p)\|\leq Ce^{-\varepsilon_0 \gamma^{-1} m}\]
for some $C>0$ and some $p\in \Fix(f^{2\lceil \gamma^{-1}m \rceil+j})$ with $0\leq j\leq l$ which combined with Lemma \ref{lem: gelfand Lipschitz} implies that
\begin{equation}\label{eq: aux ck}
|c_q(A^m(x))-c_q(A^m(p))|\leq Ce^{-\varepsilon_0 \gamma^{-1} m}.
\end{equation}
On the other hand, given $\varepsilon, \gamma>0$, by Corollary \ref{cor: upper and lower bound ck per gamma} (applied to $\gamma/2$),  there exists $C=C_{\varepsilon, \gamma}>0$ such that
\begin{equation*}
C^{-1} e^{(\lambda_q-\varepsilon) \frac{\gamma}{2} (2\lceil \gamma^{-1}m \rceil+j) }\leq c_q(A^i(p))\leq C e^{(\lambda_q+\varepsilon) \frac{\gamma}{2} (2\lceil \gamma^{-1}m \rceil+j) }
\end{equation*}
for $\frac{\gamma}{2} (2\lceil \gamma^{-1}m \rceil+j)-1 \leq i\leq \frac{\gamma}{2} (2\lceil \gamma^{-1}m \rceil+j) +l$.  
In particular, taking $i=m$ we get that there exists $C>0$ such that
\begin{equation}\label{eq: aux ck2}
C^{-1} e^{(\lambda_q-\varepsilon) m }\leq c_q(A^m(p))\leq C e^{(\lambda_q+\varepsilon) m }.
\end{equation}
Finally, since $\varepsilon_0$ may be taken as small as we want without changing $\gamma$ (recall \eqref{eq: rel gamma epsilon0}), combining \eqref{eq: aux ck} and \eqref{eq: aux ck2} we conclude the proof of the lemma.
\end{proof}

\begin{proof}[Proof of Theorem \ref{thm: main quasi-compact}]
We observe that, by Lemma \ref{lem: upper lower bound ck general}, there exists $C>0$ such that 
\[c_k(A^{n+1}(x))\geq C^{-1} e^{(\lambda_k-\varepsilon)(n+1)}\]
and 
\[c_{k+1}(A^n(x))\leq C e^{(\lambda_{k+1}+\varepsilon)n}.\]
Thus,
\[\frac{c_{k+1}(A^n(x))}{c_k(A^{n+1}(x))}\leq \frac{Ce^{(\lambda_{k+1}+\varepsilon)n}}{C^{-1} e^{(\lambda_k-\varepsilon)(n+1)}}\leq C^2e^{-(\lambda_k-\varepsilon)} e^{-(\lambda_k-\lambda_{k+1}-2\varepsilon)n}.\]
Now, since $\lambda_k>\lambda_{k+1}$, for $\varepsilon>0$ small enough we have that  $\tau:=e^{-(\lambda_k-\lambda_{k+1}-2\varepsilon)}\in (0,1)$. Consequently, taking the new constant $C$ as $C^2e^{-(\lambda_k-\varepsilon)}$, there exists $\tau\in(0,1)$ such that 
\[c_{k+1}(A^n(x))\leq C \tau^n c_k(A^{n+1}(x)).\]
Similarly, we can show that there exists $\tau\in (0,1)$ and $C>0$ such that
\[c_{k+1}(A^n(f(x)))\leq C \tau^n c_k(A^{n+1}(x)).\]
Therefore, combining these observations with Theorem \ref{thm:BM charact} we conclude the proof of Theorem \ref{thm: main quasi-compact}.
\end{proof}

\begin{remark}\label{rem: injetcitvity hyp}
We observe that the hypothesis that each $A(x)$ is injective is used only to apply the results of Blumenthal and Morris \cite{BM} in the last step of the proof. In particular, all the auxiliary results up to Lemma \ref{lem: upper lower bound ck general} are still valid without this assumption.
\end{remark}

\subsection{Proof of Theorem \ref{thm: main narrow}}
The proof of Theorem \ref{thm: main narrow} is similar to the proof of Theorem \ref{thm: main quasi-compact} except that now we have an extra parameter to consider, namely, $\delta$. As a consequence, in all the estimates where previously appeared some $\lambda_j$, now will also appear a term involving $\delta$. For instance, in Corollary \ref{cor: lk+1 finite}, we will now get an estimate of the form
\[l_{q}(\mu)\geq \lambda_1+\ldots+\lambda_q -q\delta\]
for every $\mu \in \cM_{erg}(f)$ and $0\leq q\leq k+1$. In Corollary \ref{cor: lyap exp limit vol growth}, taking $\delta>0$ small enough so that $\lambda_{k+1}-2(k+1)\delta >\sup_{\mu\in \cM_{erg}(f)}\kappa(\mu)$ (this condition is used to ensure that the $(k+1)$-largest Lyapunov exponents of $A$ with respect to $\mu$ are exceptional and thus can be approximated by Lyapunov exponents on periodic points), we will get, for every $\mu \in \cM_{erg}(f)$, that
$\zeta_q(\mu)\in [\lambda_q-\delta,\lambda_q+\delta]$ and
\[l_q(\mu)=\lim_{n\to +\infty}\frac{1}{n}\log V_{q}(A^n(x))\in [\lambda_1+\cdots+\lambda_{q}-q\delta , \lambda_1+\cdots+\lambda_{q}+q\delta ]\]
for $\mu$-almost every $x\in M$ and every $1\leq q\leq k+1$. Similarly, in the estimates that follow, instead of terms like $\lambda_q$ and $\lambda_1+\ldots+\lambda_q$, we will have terms of the form $\lambda_q+N\delta$ and $\lambda_1+\ldots+\lambda_q +N\delta$ where $N\in \mathbb Z$ is uniformly bounded with bound independent of $p$, $x$ and $n$. In particular, taking $\delta$ sufficiently small, we can proceed with the estimates as in the proof of Theorem \ref{thm: main quasi-compact} and obtain that
\[c_{k+1}(A^n(x))\leq C \tau^n c_k(A^{n+1}(x)).\]
and
\[c_{k+1}(A^n(f(x)))\leq C \tau^n c_k(A^{n+1}(x)).\]
for some $\tau\in (0,1)$ and $C>0$ and every $x\in M$ and $n\in \N$. Thus, using Theorem \ref{thm:BM charact} we conclude the proof of Theorem \ref{thm: main narrow}.

\section{Applications}\label{sec: applications}
In this section we present some applications of Theorem \ref{thm: main quasi-compact} as well as consequences of its proof. 
In order to simplify exposition, we will assume that $A$ is a compact cocycle so that condition \eqref{eq: lambda k+1 bigger kappa mu} is automatically satisfied. Analogous versions can be also obtained in the general setting. Moreover, some of the results can be obtained as well in the setting of Theorem \ref{thm: main narrow}, but we refrain from writing them explicitly.

\begin{corollary}\label{cor: hyper}
Let $f\colon M\to M$ be a homeomorphism exhibiting the closing property and $A\colon M\to \cB_0^i(X)$ be an $\alpha$-H\"older continuous map. Given $1<q\leq \dim X$ and a sequence of real numbers $\lambda_1\geq \lambda_2\geq \cdots \geq \lambda_q$, suppose $A$ has $q$-constant periodic data of type $(\lambda_1,\ldots,\lambda_q)$. If there exists $1\leq k<q$ such that $\lambda_k>0>\lambda_{k+1}$, then $A$ is uniformly hyperbolic. More precisely, for each $x\in M$ there exist a splitting $X=\cE(x)\oplus \cF(x)$ and constants $C>0$ and $\tau\in (0,1)$ such that
\begin{equation}\label{eq: hyper contr}
\|A^n(x)v\|\leq C\tau^n \|v\| \;\text{ for every } v\in  \cF(x) \text{ and } n\in \N
\end{equation}
and
\begin{equation}\label{eq: hyp exp}
\|A^{-n}(x)u\|\leq C\tau^n \|u\| \;\text{ for every } u\in  \cE(x) \text{ and } n\in \N
\end{equation}
where
 \[A^{-n}(x)=(A^n(f^{-n}(x))\rvert \cE(x))^{-1}\colon \cE(x) \to \cE(f^{-n}(x)).\]
 \end{corollary}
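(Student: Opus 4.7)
The plan is to apply Theorem~\ref{thm: main quasi-compact} to produce a dominated splitting of index $k$, and then use the signs $\lambda_{k+1}<0<\lambda_k$ to upgrade ``dominated'' to ``uniformly hyperbolic''. Since $A$ is compact, condition~\eqref{eq: lambda k+1 bigger kappa mu} is automatic, and $\lambda_k>\lambda_{k+1}$ follows from $\lambda_k>0>\lambda_{k+1}$; hence Theorem~\ref{thm: main quasi-compact} produces the continuous $A$-invariant splitting $X=\cE(x)\oplus\cF(x)$ with $\dim\cE(x)=k$ and some domination constants $C_0$, $\tau_0$.

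The second step is to identify the Lyapunov exponents along each of the two subbundles: for every $\mu\in\cM_{erg}(f)$, I would show that the Oseledets decomposition of $(A,\mu)$ refines the dominated splitting, so that the exponents along $\cE(x)$ are exactly $\lambda_1,\ldots,\lambda_k$ while those along $\cF(x)$ are at most $\lambda_{k+1}$. This uses Corollary~\ref{cor: lyap exp limit vol growth} (which pins the top $k+1$ exponents of $(A,\mu)$ to be $\lambda_1,\ldots,\lambda_{k+1}$), the $A$-invariance of $\cE$ and $\cF$, and the absolute inequality $\|A^n(x)v\|\le C_0\tau_0^n\,\inf_{u\in\cE(x),\|u\|=1}\|A^n(x)u\|\cdot\|v\|$ for $v\in\cF(x)$, which is inherited directly from the definition of domination; dimension counting ($\dim\cE=k$) then forces the matching equalities.

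The final step is to upgrade the resulting a.e.\ asymptotic bounds to \emph{uniform} bounds via Proposition~\ref{prop: BS13}, in the same spirit as the proof of Lemma~\ref{lem: upper bound Vk}. For uniform contraction on $\cF$, consider $b_n(x):=\log\|A^n(x)|_{\cF(x)}\|-(\lambda_{k+1}+\varepsilon)n$ with $\varepsilon\in(0,-\lambda_{k+1})$: this sequence is continuous (from continuity of $\cF$ and $A$) and subadditive (from invariance of $\cF$), with $\lim_n b_n(x)/n\le-\varepsilon<0$ $\mu$-a.e.\ for every ergodic $\mu$, so Proposition~\ref{prop: BS13} combined with submultiplicativity and compactness of $M$ gives $\|A^n(x)|_{\cF(x)}\|\le Ce^{(\lambda_{k+1}+\varepsilon)n}$ uniformly, which yields~\eqref{eq: hyper contr}. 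For uniform expansion on $\cE$, the injectivity of $A(x)$ together with $A(x)\cE(x)=\cE(f(x))$ makes each $A^n(x)|_{\cE(x)}$ a linear isomorphism between $k$-dimensional spaces, so $(A^n(x)|_{\cE(x)})^{-1}$ is well-defined; applying the same scheme to $a_n(x):=\log\|(A^n(x)|_{\cE(x)})^{-1}\|+(\lambda_k-\varepsilon)n$ (whose a.e.\ limit equals $-\varepsilon$, since the smallest Oseledets exponent on $\cE$ is $\lambda_k$) yields $\|(A^n(y)|_{\cE(y)})^{-1}\|\le Ce^{-(\lambda_k-\varepsilon)n}$ uniformly in $y\in M$, and substituting $y=f^{-n}(x)$ gives~\eqref{eq: hyp exp}.

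The main obstacle I anticipate is the Oseledets/domination alignment in the second step, which is classical in the finite-dimensional invertible setting but requires care in the present infinite-dimensional semi-invertible framework, where the ``slow half'' of Oseledets is a single closed codimension-$k$ subspace $V_{s+1}(x)$ of Theorem~\ref{theo: MET} rather than a direct sum of finite-dimensional pieces. I expect the alignment to go through by combining the absolute domination inequality above with the Oseledets asymptotics and the dimension count $\dim\cE=k$, but this is where the proof spends its real work; the remaining steps are essentially a direct reuse of the subadditive-uniform machinery already established in the paper.
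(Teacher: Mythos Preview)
Your proposal is correct and follows essentially the same route as the paper: obtain the dominated splitting from Theorem~\ref{thm: main quasi-compact}, use Corollary~\ref{cor: lyap exp limit vol growth} plus domination and the dimension count to pin the Lyapunov exponents along $\cE$ and $\cF$, and then feed the subadditive sequences $b_n(x)=\log\|A^n(x)|_{\cF(x)}\|-(\lambda_{k+1}+\varepsilon)n$ and (for the expanding part) the logarithm of the inverse norm on $\cE$ into Proposition~\ref{prop: BS13} to promote the a.e.\ bounds to uniform ones. The only cosmetic difference is that the paper treats $a_n(x)=\log\|A^{-n}(x)\|+(\lambda_k-\varepsilon)n$ as a subadditive sequence over $f^{-1}$, whereas you take $\log\|(A^n(x)|_{\cE(x)})^{-1}\|$ over $f$ and then substitute $y=f^{-n}(x)$; and the alignment step you single out as the ``real work'' is simply asserted in the paper (it is indeed standard once you have Corollary~\ref{cor: lyap exp limit vol growth} and domination).
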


\begin{proof}
We start observing that, by Theorem \ref{thm: main quasi-compact}, $A$ admits a dominated splitting of index $k$ which we will denote by $X=\cE(x)\oplus \cF(x)$ as in Section \ref{sec: dominated splitting}. Moreover, by Corollary \ref{cor: lyap exp limit vol growth}, we know that the $(k+1)$-largest Lyapunov exponents of $A$ with respect to any ergodic measure $\mu\in \cM(f)$ are $\lambda_1,\ldots,\lambda_{k+1}$. Thus, since the splitting $X=\cE(x)\oplus \cF(x)$ is dominated and has index $k$, we get that $\lambda_1,\ldots,\lambda_{k}$ are the Lyapunov exponents of $A(x)_{|\cE(x)}$ with respect to any ergodic measure $\mu$ while all the Lyapunov exponents of $A(x)_{|\cF(x)}$ with respect to any ergodic measure $\mu$ are smaller than or equal to $\lambda_{k+1}$. Furthermore, since $\dim \cE(x)=k$ and $A(x)\cE(x)=\cE(f(x))$ for every $x\in M$, we get that $A(x)_{|\cE(x)}$ is invertible for every $x\in M$ and it makes sense to consider 
 \[A^{-n}(x)=(A^n(f^{-n}(x))\rvert \cE(x))^{-1}\colon \cE(x) \to \cE(f^{-n}(x))\]
for $n\in\N$. In particular, from the previous observations together with the Multiplicative Ergodic Theorem for finite-dimensional cocycles (see \cite[Theorem 4.2]{LLE}), it follows that
\[\lim_{n\to +\infty}\frac{1}{n}\log \|A^{-n}(x)\|\leq -\lambda_k\]
for almost every $x\in M$ with respect to every ergodic measure $\mu\in \cM(f)$.

Let us now consider the function $a_n(x)=\log \|A^{-n}(x)\|+(\lambda_k-\varepsilon)n$. Then, $(a_n(x))_{n\in\N}$ is a subaditive sequence over $f^{-1}$ and, moreover, since the splitting $X=\cE(x)\oplus \cF(x)$ is continuous, each $a_n(x)$ is also a continuous function. Furthermore, by the observations above,
\[
\begin{split}
\lim_{n\to +\infty} \frac{a_n(x)}{n}&=\lim_{n\to +\infty} \frac{\log \|A^{-n}(x)\|+(\lambda_k-\varepsilon)n}{n}\\
&\leq -\lambda_k+(\lambda_k-\varepsilon)=-\varepsilon<0
\end{split}
 \]
for almost every $x\in M$ with respect to every ergodic measure $\mu\in \cM(f)$. Proceeding as in the proof of Lemma \ref{lem: upper bound Vk} we get that there exists $C>0$ such that
\[\|A^{-n}(x)\|\leq Ce^{(-\lambda_k+\varepsilon)n}\]
for every $n\in \N$. Thus, taking $\tau=e^{-\lambda_{k}+\varepsilon}$ with $\varepsilon>0$ small enough so that $\varepsilon<\lambda_{k}$ we get that \eqref{eq: hyp exp} is satisfied.

Similarly, considering $b_n(x)=\log \|A^n(x)_{\mid{\cF(x)}}\|-(\lambda_{k+1}+\varepsilon)n$ and proceeding as in the proof of Lemma \ref{lem: upper bound Vk}, we get that there exists $C>0$ such that
\[\|A^{n}(x)\|\leq Ce^{(\lambda_{k+1}+\varepsilon)n}\]
for every $n\in \N$. Thus, taking $\tau=e^{\lambda_{k+1}+\varepsilon}$ with $\varepsilon>0$ small enough so that $\varepsilon<-\lambda_{k+1}$, we get that \eqref{eq: hyper contr} is satisfied. The proof of the corollary is complete.
\end{proof}

\begin{corollary}
Let $f\colon M\to M$ be a homeomorphism exhibiting the closing property and $A\colon M\to \cB_0^i(X)$ be an $\alpha$-H\"older continuous map. Given $1<q\leq \dim X$ and a sequence of real numbers $\lambda_1\geq \lambda_2\geq \cdots \geq \lambda_q$, suppose $A$ has $q$-constant periodic data of type $(\lambda_1,\ldots,\lambda_q)$. If there exists $1\leq t<i<j\leq q$ such that $\lambda_t>0=\lambda_i>\lambda_{j}$, then $A$ is partially hyperbolic. More precisely, for each $x\in M$ there exist a \emph{dominated splitting} $X=\cE(x)\oplus \cH(x) \oplus \cF(x)$ and constants $C>0$ and $\tau\in (0,1)$ such that
\begin{equation*}
\|A^n(x)v\|\leq C\tau^n \|v\| \;\text{ for every } v\in  \cF(x) \text{ and } n\in \N
\end{equation*}
and
\begin{equation*}
\|A^{-n}(x)u\|\leq C\tau^n \|u\| \;\text{ for every } u\in  \cE(x) \text{ and } n\in \N
\end{equation*}
where $A^{-n}(x)$ is as in Corollary \ref{cor: hyper}.
Moreover,
\begin{equation*}
\lim_{n\to +\infty}\frac{1}{n}\log \|A^n(x)w\|=0\;\text{ for every } v\in  \cH(x)\setminus \{0\}.
\end{equation*}
 \end{corollary}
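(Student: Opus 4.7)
The plan is to reduce this statement to two applications of Theorem~\ref{thm: main quasi-compact} together with a sub\-additive argument in the spirit of Corollary~\ref{cor: hyper}, and then to glue the two resulting dominated splittings into a threefold decomposition.

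\textbf{Step 1: Locate two gaps in the periodic data.} Because $\lambda_1\geq\cdots\geq\lambda_q$ is monotone and $\lambda_t>0=\lambda_i>\lambda_j$ with $t<i<j$, I let $k_1$ be the largest index with $\lambda_{k_1}>0$ and $k_2$ be the largest index with $\lambda_{k_2}=0$. Then $t\leq k_1<k_2\leq q-1$ and
\[
\lambda_{k_1}>\lambda_{k_1+1}=0=\lambda_{k_2}>\lambda_{k_2+1}.
\]
Applying Theorem~\ref{thm: main quasi-compact} with $k=k_1$ and with $k=k_2$ yields two continuous dominated splittings
\[
X=\cE(x)\oplus G_1(x)\qquad\text{and}\qquad X=G_2(x)\oplus\cF(x),
\]
with $\dim\cE(x)=k_1$ and $\dim G_2(x)=k_2$.

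\textbf{Step 2: Compatibility.} I will show that $\cE(x)\subset G_2(x)$ and $\cF(x)\subset G_1(x)$, so that setting $\cH(x):=G_1(x)\cap G_2(x)$ gives a continuous decomposition $X=\cE(x)\oplus\cH(x)\oplus\cF(x)$ with $\dim\cH(x)=k_2-k_1$. Uniqueness of dominated splittings of a fixed index (compare \cite[Theorem~2]{BM}), together with Corollary~\ref{cor: lyap exp limit vol growth} identifying the top $k_i$ Lyapunov exponents along every ergodic measure, forces the finite-dimensional equivariant subspace of a dominated splitting of smaller index to sit inside that of larger index; this is the standard nesting used in the finite-dimensional theory and transfers verbatim since both $\cE$ and $G_2$ are finite-dimensional. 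The dominated splitting property for the refined triple follows directly from that of the two coarser splittings.

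\textbf{Step 3: Uniform expansion on $\cE$ and uniform contraction on $\cF$.} On $\cE$ the $k_1$ Lyapunov exponents with respect to every ergodic measure are $\lambda_1,\ldots,\lambda_{k_1}$, all bounded below by $\lambda_{k_1}>0$, while on $\cF$ every Lyapunov exponent is bounded above by $\lambda_{k_2+1}<0$. Exactly as in Corollary~\ref{cor: hyper}, I apply Proposition~\ref{prop: BS13} to the continuous subadditive sequences
\[
a_n(x)=\log\bigl\|A^{-n}(x)|_{\cE(x)}\bigr\|+(\lambda_{k_1}-\varepsilon)n,
\qquad
b_n(x)=\log\bigl\|A^n(x)|_{\cF(x)}\bigr\|-(\lambda_{k_2+1}+\varepsilon)n,
\]
to upgrade the almost-everywhere estimates into uniform exponential bounds, yielding \eqref{eq: hyp exp} and \eqref{eq: hyper contr}.

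\textbf{Step 4: Vanishing exponent on $\cH$.} For the upper bound, any $w\in\cH(x)\subset G_1(x)$ satisfies, again by the Proposition~\ref{prop: BS13} argument applied to $\log\|A^n(x)|_{G_1(x)}\|-\varepsilon n$ (whose ergodic limit is $\leq\lambda_{k_1+1}=0-\varepsilon<0$ by Corollary~\ref{cor: lyap exp limit vol growth}), a uniform estimate $\|A^n(x)w\|\leq Ce^{\varepsilon n}\|w\|$. For the lower bound I exploit that $G_2(x)$ is finite-dimensional and $A|_{G_2}$ is invertible, so that the inverse cocycle $B^{-n}(x):G_2(x)\to G_2(f^{-n}(x))$ is well-defined; its ergodic limits are bounded above by $-\lambda_{k_2}=0$, and a further application of Proposition~\ref{prop: BS13} to $\log\|B^{-n}(x)\|-\varepsilon n$ gives $\|B^{-n}(x)\|\leq Ce^{\varepsilon n}$ uniformly, hence $\|A^n(x)w\|\geq C^{-1}e^{-\varepsilon n}\|w\|$ for every $w\in\cH(x)\subset G_2(x)$. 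Taking $\varepsilon\to 0$ through a sequence yields $\lim_{n\to\infty}\tfrac1n\log\|A^n(x)w\|=0$ for every nonzero $w\in\cH(x)$.

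The main obstacle I anticipate is Step~2: checking that the two dominated splittings of different indices are genuinely nested (and not merely transverse with matching dimensions) in the Banach setting. The argument reduces to comparing the finite-dimensional invariant bundles produced by the multiplicative ergodic theorem (Theorem~\ref{theo: MET}) along periodic measures, where Corollary~\ref{cor: lyap exp limit vol growth} pins down the Lyapunov exponents, but some care is needed to pass from the measure-theoretic nesting to a continuous, everywhere-defined nesting.
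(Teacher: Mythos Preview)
Your proposal is correct and follows exactly the paper's approach: apply Theorem~\ref{thm: main quasi-compact} twice at the indices $k_1=\max\{s:\lambda_s>0\}$ and $k_2=\max\{s:\lambda_s=0\}$, set $\cH=G_1\cap G_2$, and obtain the exponential bounds on $\cE$ and $\cF$ by the subadditive argument of Corollary~\ref{cor: hyper}. In fact you are more careful than the paper on two points---the nesting in Step~2 (which the paper simply asserts by writing $\cH:=\tilde\cE\cap\tilde\cF$ without further comment) and the vanishing-exponent claim on $\cH$ in Step~4 (which the paper's proof does not address at all)---so your identified ``obstacle'' is real but is not something the paper resolves either; your sketch via uniqueness of dominated splittings and the finite-dimensionality of $\cE,G_2$ is the right way to fill it.
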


\begin{proof}
The dominated splitting with three bundles can be obtained by applying Theorem \ref{thm: main quasi-compact} twice: first,  taking $\lambda_k$ with $k=\max\{t: \lambda_t>0\}$ we obtain a splitting $X=\cE(x)\oplus \tilde \cF(x)$; next, taking $\lambda_k$ with $k=\max\{i: \lambda_i=0\}$ we obtain a splitting $X=\tilde \cE(x)\oplus \cF(x)$. Then, considering $\cH(x):=\tilde \cE(x)\cap \tilde \cF(x)$ we get the desired dominated splitting into three bundles. Exponential estimates along $\cE(x)$ and $\cF(x)$ can be obtained by proceeding as in the proof of Corollary \ref{cor: hyper}.
\end{proof}

\begin{corollary}\label{cor: conv ck}
Let $f\colon M\to M$ be a homeomorphism exhibiting the closing property and $A\colon M\to \cB_0(X)$ be an $\alpha$-H\"older continuous map. Given $1<q\leq \dim X$ and a sequence of real numbers $\lambda_1\geq \lambda_2\geq \cdots \geq \lambda_q$, suppose $A$ has $q$-constant periodic data of type $(\lambda_1,\ldots,\lambda_q)$. Then, for every $1\leq k\leq q$ we have that
\begin{equation}\label{eq: limits exist everywhere}
\lambda_k=\lim_{n\to +\infty} \frac{1}{n}\log c_k(A^n(x)) 
\end{equation}
\emph{for every} $x\in M$. Moreover, these limits are uniform on $x\in M$.
\end{corollary}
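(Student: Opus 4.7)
The plan is to show that all the intermediate estimates developed in Section~\ref{sec: proof theo compact} in the course of proving Theorem~\ref{thm: main quasi-compact}, up to and including Lemma~\ref{lem: upper lower bound ck general}, already give a uniform two-sided bound on the Gelfand numbers from which Corollary~\ref{cor: conv ck} follows immediately. The key observation is that the gap hypothesis $\lambda_k > \lambda_{k+1}$ of Theorem~\ref{thm: main quasi-compact} was used only at the very last step to produce the factor $\tau \in (0,1)$; every auxiliary result preceding it used only constant periodic data together with condition~\eqref{eq: lambda k+1 bigger kappa mu}.

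In the present setting $A$ takes values in $\cB_0(X)$, so $\kappa(\mu) = -\infty$ for every $\mu \in \cM_{erg}(f)$ and condition~\eqref{eq: lambda k+1 bigger kappa mu} holds automatically for any real threshold. I would therefore repeat the arguments of Corollary~\ref{cor: lk+1 finite} through Lemma~\ref{lem: upper lower bound ck general} with the index $k+1$ everywhere replaced by $q$, working with the top $q$ constant exponents $\lambda_1 \geq \cdots \geq \lambda_q$. This yields, for every $\varepsilon > 0$ sufficiently small and every $1 \leq j \leq q$, a constant $C = C_{\varepsilon, j} > 0$ such that
\[
C^{-1} e^{(\lambda_j - \varepsilon) n} \leq c_j(A^n(x)) \leq C e^{(\lambda_j + \varepsilon) n}
\]
for every $x \in M$ and every $n \in \N$.

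Taking logarithms and dividing by $n$, this bound implies
\[
\left| \frac{1}{n} \log c_j(A^n(x)) - \lambda_j \right| \leq \frac{\log C}{n} + \varepsilon
\]
uniformly in $x \in M$. Letting $n \to +\infty$ and then $\varepsilon \to 0^+$ yields both \eqref{eq: limits exist everywhere} and the uniformity of the convergence on $M$.

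Consequently, the entire argument is essentially a bookkeeping exercise, and no new ingredient is required. The only mild point to verify is that Corollary~\ref{cor: lyap exp limit vol growth}, whose proof explicitly relies on \eqref{eq: lambda k+1 bigger kappa mu} to ensure that the top Lyapunov exponents of any ergodic measure are exceptional and hence approximable by periodic data via \cite[Theorem 2.5]{BD}, remains valid for the top $q$ exponents in our current compact setting; this is automatic because $\kappa(\mu) = -\infty$ whenever $A$ takes values in $\cB_0(X)$, so $\lambda_j > \kappa(\mu)$ trivially for every $1 \leq j \leq q$.
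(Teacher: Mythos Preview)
Your proposal is correct. Both you and the paper obtain the pointwise limit~\eqref{eq: limits exist everywhere} directly from Lemma~\ref{lem: upper lower bound ck general} (noting, as in Remark~\ref{rem: injetcitvity hyp}, that injectivity of $A(x)$ is not needed for that lemma). Where you diverge is in the treatment of uniformity: you observe that the constant $C=C_{\varepsilon}$ in Lemma~\ref{lem: upper lower bound ck general} is already independent of $x$, so the two-sided bound immediately gives
\[
\sup_{x\in M}\left|\tfrac{1}{n}\log c_j(A^n(x))-\lambda_j\right|\leq \tfrac{\log C_\varepsilon}{n}+\varepsilon,
\]
and uniform convergence follows by letting $n\to\infty$ and then $\varepsilon\to 0$. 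The paper instead passes to the volume growth $V_k$ via Lemma~\ref{lemma: relation volume growth}, invokes subadditivity and Morris's Proposition~\ref{prop: unif conv} to force $\sup_x \tfrac{1}{n}\log V_k(A^n(x))\to \lambda_1+\cdots+\lambda_k$, and then transfers uniformity back to $c_k$. Your route is more elementary and avoids the external input of Proposition~\ref{prop: unif conv}; the paper's route, on the other hand, illustrates a general mechanism (subadditive sequences with constant ergodic averages converge uniformly) that applies even when one does not have an explicit uniform two-sided estimate in hand.
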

Before we go into the proof of the corollary, let us recall the following consequence of \cite[Theorem A.3]{Mor}.

\begin{proposition}\label{prop: unif conv}
If $a_n\colon M\to \R$ is a sequence of continuous and subadditive functions, then
\[\lim_{n\to +\infty} \sup_{x\in M}\frac{a_n(x)}{n}=\sup_{\mu \in \cM(f)} \inf_{n\in \N}\frac{1}{n}\int_M a_n(x)d\mu.\]
\end{proposition}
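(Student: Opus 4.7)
The plan is to prove the two inequalities separately. First I would set $b_n := \sup_{x\in M} a_n(x)$, which is finite since each $a_n$ is continuous on the compact space $M$. Subadditivity of $(a_n)$ along the $f$-orbits transfers to $(b_n)$: for any $x$, $a_{n+m}(x)\le a_n(x)+a_m(f^n(x))\le b_n+b_m$, so $b_{n+m}\le b_n+b_m$. Fekete's subadditive lemma then gives the existence of $c:=\lim_n b_n/n=\inf_n b_n/n$. The easy direction $\sup_{\mu\in\cM(f)}\inf_n \tfrac1n\int a_n\,d\mu \le c$ is immediate by integrating: for any invariant $\mu$, $\int a_n\,d\mu \le b_n$, and dividing by $n$ and taking the infimum yields the bound.

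For the reverse inequality I would use an empirical-measure argument. Pick $x_n\in M$ achieving $a_n(x_n)=b_n$ (which exists by continuity and compactness) and form
\[
\mu_n := \frac{1}{n}\sum_{i=0}^{n-1}\delta_{f^i(x_n)}.
\]
By weak-$\ast$ compactness of the space of Borel probability measures on $M$, extract a subsequence $\mu_{n_j}\to\mu$; a standard Krylov--Bogolyubov telescoping estimate shows that $\mu\in\cM(f)$. The core technical step is then to relate $a_n(x_n)/n$ to $\int a_k\,d\mu_n$ for each fixed $k$. Writing $n-j = q_j k + r_j$ with $0\le r_j<k$ for each $j\in\{0,\ldots,k-1\}$ and iterating subadditivity from position $j$ gives
\[
a_n(x_n) \le a_j(x_n) + \sum_{i=0}^{q_j-1} a_k(f^{j+ik}(x_n)) + a_{r_j}(f^{n-r_j}(x_n)).
\]
Summing over $j=0,\ldots,k-1$, each index $m\in\{0,\ldots,n-1\}$ appears in the resulting double sum at most once (with $j=m\bmod k$), and only $O(k)$ indices near the boundary are missed. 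Combined with the boundedness of $a_j$ for $j\le k$, this gives
\[
k\cdot a_n(x_n) \le n\int a_k\,d\mu_n + C_k,
\]
for a constant $C_k$ depending on $k$ but not on $n$. Dividing by $nk$, passing to the limit along $(n_j)$ using the continuity of $a_k$ and the weak-$\ast$ convergence $\mu_{n_j}\to\mu$, I obtain
\[
c \;=\; \lim_j \frac{a_{n_j}(x_{n_j})}{n_j} \;\le\; \frac{1}{k}\int a_k\,d\mu.
\]
Taking the infimum over $k$ and noting $\mu\in\cM(f)$ yields $c \le \sup_{\nu\in\cM(f)}\inf_k \tfrac{1}{k}\int a_k\,d\nu$, completing the proof.

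The main obstacle is the bookkeeping in the tiling step: I need the error $C_k$ to be bounded independently of $n$ (and of the arbitrary choice of $x_n$), so that $C_k/(nk)\to 0$ as $n\to\infty$. The key observation making this work is that the union $\bigcup_j\{j+ik:0\le i\le q_j-1\}$ misses at most $O(k)$ of the indices in $\{0,\ldots,n-1\}$, and the missed contributions as well as the $a_j$ and $a_{r_j}$ boundary terms are each uniformly bounded since each $a_r$ with $r\le k$ is a continuous function on the compact $M$. Everything else is a routine combination of Fekete, weak-$\ast$ compactness, and the continuity assumption used precisely in passing the limit $\int a_k\,d\mu_{n_j}\to \int a_k\,d\mu$.
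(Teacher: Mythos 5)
Your proof is correct. The tiling/averaging argument is sound: for each residue $j$ the arithmetic progression $\{j+ik:0\le i\le q_j-1\}$ exhausts the residue class $j\bmod k$ in $\{0,\dots,n-1\}$ except for at most one index, so the union over $j$ misses at most $k$ indices, and since $a_1,\dots,a_k$ are continuous on the compact space $M$ all boundary contributions are bounded by a constant $C_k$ independent of $n$ and of the maximizers $x_n$; this is exactly what is needed for $C_k/(nk)\to 0$. The only cosmetic points are the conventions $a_0\equiv 0$ and $a_{r_j}\equiv 0$ when $r_j=0$, and the degenerate case $c=-\infty$, where the reverse inequality is vacuous and the easy direction already forces equality; neither affects the argument. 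The route is, however, different from the paper's: the paper does not prove the proposition at all but simply invokes it as a consequence of Theorem A.3 of Morris's work on Mather sets for sequences of matrices, whereas you give a self-contained elementary proof combining Fekete's lemma, the Krylov--Bogolyubov construction of an invariant weak-$\ast$ limit of empirical measures along a maximizing sequence, and the subadditive tiling estimate. What your approach buys is transparency and independence from the cited reference (it is essentially the standard proof of the subadditive variational principle, in the spirit of Schreiber and Sturman--Stark, which also underlies Morris's result); what the citation buys the paper is brevity and access to the more refined statements in that reference. Either is acceptable here.
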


 We note that variations of Proposition \ref{prop: unif conv} were obtained previously by several different authors. We refer to \cite{Mor} and references therein for more on this subject.

\begin{proof}[Proof of Corollary \ref{cor: conv ck}]
Recall that, by Remark \ref{rem: injetcitvity hyp}, all the results up to Lemma \ref{lem: upper lower bound ck general} can be used in this setting. Thus, the fact that \eqref{eq: limits exist everywhere} holds for  
\emph{every} $x\in M$ follows directly from Lemma \ref{lem: upper lower bound ck general}. Now, this result combined with Lemma \ref{lemma: relation volume growth} implies that
\[\lim_{n\to +\infty} \frac{1}{n}\log V_k(A^n(x)) =\lambda_1+\ldots+\lambda_k
\]
for every $x\in M$. Then, recalling Lemma \ref{lemma: subadditive} and using Proposition \ref{prop: unif conv}, we get that
\[\lim_{n\to +\infty} \sup_{x\in M} \frac{1}{n}\log V_k(A^n(x))=\lambda_1+\ldots+\lambda_k \]
and, consequently, $\left(\frac{1}{n}\log V_n(x)\right)_{n\in \N}$ converges uniformly to $\lambda_1+\ldots+\lambda_k$. Using again Lemma \ref{lemma: relation volume growth} we get that the limit \eqref{eq: limits exist everywhere} is also uniform.
\end{proof}

In order to conclude the paper, let us make two simple observations. For this purpose, we will restrict ourselves to the case when $\dim X<+\infty$. Moreover, suppose we are in the hypothesis of Corollary \ref{cor: conv ck} and assume $A$ has $q$-constant periodic data with $q=\dim X$.
\begin{remark}
It was observed in \cite[Example 2.2]{Bac} that having all the Lyapunov exponents with respect to every measure $\mu\in \cM(f)$ uniformly bounded by bellow is not enough to guarantee that a semi-invertible cocycle is invertible. On the other hand, if the cocycle has $q$-constant periodic data with $q=\dim X$, we get from Lemma \ref{lem: upper lower bound ck general} (recall Remark \ref{rem: injetcitvity hyp}) that $A(x)$ is indeed invertible for every $x\in M$.
\end{remark}

\begin{remark}
Roughly speaking, a point $x\in M$ is said to be \emph{Lyapunov regular} if, at $x$, Lyapunov exponents are well-defined and behave predictably according to the Multiplicative Ergodic Theorem. In contrast, $x$ is said to be \emph{Lyapunov irregular} if it is not regular. As a consequence of the Multiplicative Ergodic Theorem, we know that $\mu$-almost every $x\in M$ is Lyapunov regular with respect to any measure $\mu\in\cM(f)$. On the other hand, in some contexts, the set of Lyapunov irregular points can be big in the topological sense as observed in \cite{Fur}. As a consequence of Corollary \ref{cor: conv ck} and, more generally, the proof of Theorem \ref{thm: main quasi-compact}, assuming $A$ has $q$-constant periodic data with $q=\dim X$, we get that the set of Lyapunov irregular points is empty.
\end{remark}



%
%


\end{document}